\newcommand{\private}[1]{}
\renewcommand\l@subsection{\@tocline{2}{0pt}{2pc}{5pc}{}}
\newcommand{\R}{{\mathbb R}}
\newcommand{\hofiber}{\operatorname{hofiber}}
\newcommand{\holim}{\operatorname{holim}}
\newcommand{\hocolim}{\operatorname{hocolim}}
\newcommand{\tfiber}{\operatorname{tfiber}}
\newcommand{\Map}{\operatorname{Map}}
\newcommand{\Emb}{\operatorname{Emb}}
\newcommand{\Imm}{\operatorname{Imm}}
\newcommand{\Conf}{\operatorname{Conf}}
\newcommand{\calX}{{\mathcal{X}}}
\newcommand{\calY}{{\mathcal{Y}}}
\newcommand{\calC}{{\mathcal{C}}}
\newcommand{\calO}{{\mathcal{O}}}
\newcommand{\calP}{{\mathcal{P}}}
\newcommand{\calQ}{{\mathcal{Q}}}
\newcommand{\calT}{{\mathcal{T}}}
\newcommand{\del}{{\partial}}
\newcommand{\Top}{\operatorname{Top}}
\theoremstyle{plain}
\newtheorem{thm}{Theorem}[section]
\newtheorem{prop}[thm]{Proposition}
\theoremstyle{definition}
\newtheorem{defin}[thm]{Definition}
\newtheorem{example}[thm]{Example}
\newtheorem{def/ex}[thm]{Definition/Example}
\theoremstyle{remark}
\newtheorem{rem}[thm]{Remark}
\newtheorem{rems}[thm]{Remarks}
\newcommand{\refS}[1]{Section~\ref{S:#1}}
\newcommand{\refT}[1]{Theorem~\ref{T:#1}}
\newcommand{\refP}[1]{Proposition~\ref{P:#1}}
\newcommand{\refD}[1]{Definition~\ref{D:#1}}
\newcommand{\refEx}[1]{Example~\ref{Ex:#1}}
\begin{document}


\title[Convergence of the Taylor tower for embeddings in $\R^n$]{A streamlined proof of the convergence of the Taylor tower for embeddings in $\R^n$}

%

\author{Franjo \v Sar\v cevi\'c}
\address{Department of Mathematics, University of Sarajevo}
\email{franjo.sarcevic@live.de}

\author{Ismar Voli\'c}
\address{Department of Mathematics, Wellesley College, Wellesley, MA}
\email{ivolic@wellesley.edu}
\urladdr{ivolic.wellesley.edu}

\subjclass[2010]{Primary: 57R40; Secondary: 55R80}
\keywords{Calculus of functors, manifold calculus, Taylor tower, embeddings, immersions, configuration space, Blakers-Massey Theorem}
%


\begin{abstract}
Manifold calculus of functors has in recent years been successfully used in the study of the topology of various spaces of embeddings of one manifold in another.  Given a space of embeddings, the theory produces a Taylor tower whose purpose is to approximate this space in a suitable sense.  Central to the story are deep theorems about the convergence of this tower.  We provide an exposition of the convergence results in the special case of embeddings into $\R^n$, which has been the case of primary interest in applications.  We try to use as little machinery as possible and give several improvements and restatements of existing arguments used in the proofs of the main results. 
\end{abstract}
\maketitle
\tableofcontents

\parskip=4pt
\parindent=0cm


\section{Introduction}\label{S:Intro}


Let $M$ be a smooth manifold and let $\Emb(M,\R^n)$ be the space of smooth embeddings of $M$ in $\R^n$ (see \refD{Embedding}). For many $M$, $\Emb(M,\R^n)$ is an interesting and often a notoriously difficult space (one need not look   any further than knot theory).  One technique that has emerged to the forefront of the study of the topology of $\Emb(M,\R^n)$ in the last two decades is \emph{manifold calculus of functors}, due to Goodwillie and Weiss \cite{GW:EI2, W:EI1}.  The basic idea in this theory is to  construct the \emph{Taylor tower} for $\Emb(M,\R^n)$:
$$
\Emb(M,\R^n)\longrightarrow\big(T_\infty \Emb(M,\R^n)\to\cdots\to T_k\Emb(M,\R^n)\to\cdots \to T_0\Emb(M,\R^n)  \big).
$$
This should be thought of as an analog of the Taylor series of an analytic function;  the space $T_k\Emb(M,\R^n)$,  the $k^{th}$ Taylor stage of the tower, plays the role of the $k^{th}$ Taylor polynomial, and the space $T_\infty \Emb(M,\R^n)$, the inverse limit of the tower, plays the role of the Taylor series (see \refD{TowerStages}).  All the spaces, including $\Emb(M,\R^n)$, are meant to be regarded as functors in a certain way, but the picture presented here only takes into account the value of those functors on a single object, namely $M$ itself.

Taylor tower for embeddings has been used to great effect in a variety of situations \cite{ALTV, ALV, AT:LongPlanes2, BW:EmbConfCat, DwyerHess:LongKnots, GM:LinksEstimates, LTV:Vass, M:Emb, M:LinkNumber, M:Milnor, MV:Links, S:OKS, PAST:LinkSSCollapse, V:FTK}.
The driving force behind this success are deep results about its convergence, and this is what we will address in this paper. Convergence questions can be stated in terms of the connectivities of the various maps in the tower.  (We will say more about this, including about what we mean by ``convergence'', at the beginnings of Sections \ref{S:TowerConverge} and \ref{S:TowerConvergeToEmb}.)  The main results we will prove are as follows.

\begin{thm}[Convergence of the Taylor tower]\label{T:TowerConvergenceIntro}
Suppose $M$ is a smooth manifold without boundary of dimension $m$.  Then, for $k\geq 0$, the map
$$
T_{k+1}\Emb(M,\R^n)\longrightarrow T_{k}\Emb(M,\R^n)
$$
is
$$
\big(k(n-m-2)-m+1\big)\text{-connected.}
$$
Therefore if $n>m+2$, the connectivities increase with $k$, and hence the Taylor tower for $\Emb(M,\R^n)$ converges.
\end{thm}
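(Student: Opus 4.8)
The plan is to reduce the statement about the maps $T_{k+1}\Emb(M,\R^n)\to T_k\Emb(M,\R^n)$ to a statement about the $k$-th layer of the tower, i.e.\ the homotopy fiber $L_k\Emb(M,\R^n)$ of this map, and then to an even more concrete statement about the cubical diagrams whose total fibers build these layers out of configuration spaces. First I would recall the standard description: $T_k\Emb(M,\R^n)$ is the homotopy limit of a diagram indexed on the poset of open subsets of $M$ diffeomorphic to at most $k$ open balls, and the fiber of $T_{k+1}\to T_k$ over a point is (up to homotopy) a section space of a bundle over the configuration-space-like object $\binom{M}{k+1}$ whose fiber is the total homotopy fiber of a certain $(k+1)$-cube built from the values of $\Emb(-,\R^n)$ (or rather its ``reduced'' version relative to immersions). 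So the key step is to estimate the connectivity of that total homotopy fiber.

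The central input is the Blakers--Massey theorem for cubical diagrams: the total homotopy fiber of a strongly cocartesian $(k+1)$-cube is at least $(1 - (k+1) + \sum_i (a_i+1))$-connected when each of the $(k+1)$ maps out of the initial vertex is $a_i$-connected. Here the relevant cube is the one sending a subset $S\subseteq\{0,1,\dots,k\}$ to $\Emb(T\cup \coprod_{i\in S}B_i,\R^n)$ fibered over immersions, where $T$ is a fixed handle and the $B_i$ are small disjoint balls; its initial maps are governed by how connected the inclusion $\Emb(T,\R^n)\to \Emb(T\cup B_i,\R^n)$ is after passing to the immersion-relative version, and a Morlet-style disjunction / the fact that $\rImm(-,\R^n)$-fibers of embedding spaces are highly connected gives each $a_i$ roughly $n-m-2$. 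Plugging $k+1$ copies of $a_i \approx n-m-2$ into Blakers--Massey yields total fiber connectivity about $(k+1)(n-m-1) - k$, and then taking sections over the $m$-dimensional base $\binom{M}{k+1}$ costs $m$ in connectivity, giving the layer $L_k\Emb(M,\R^n)$ connectivity approximately $k(n-m-2) - m + 1$ after the bookkeeping is done carefully; the long exact sequence of the fibration $L_k \to T_{k+1} \to T_k$ then transfers this to the connectivity of the map, which is what is claimed.

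The main obstacle, and the place where the argument has to be done with care rather than by citation, is getting the cube to which one applies Blakers--Massey to actually be (strongly) cocartesian, or at least cocartesian enough, with the correct connectivity estimates on its initial maps. This is exactly where manifold calculus traditionally invokes the hardest disjunction-theoretic results (Goodwillie's multiple disjunction lemma / higher Blakers--Massey for embeddings), and the streamlining the paper promises presumably lives here: one wants to replace $\Emb$ by the homotopy fiber over $\Imm$ (or $\rImm$), where immersion theory (Smale--Hirsch) makes the relevant cubes manageable and the connectivity of the maps $\Emb(T,\R^n)\to\Emb(T\cup B_i,\R^n)$ reduces to a computation with configuration spaces of points in $\R^n$ and their relation to spheres. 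So I expect the bulk of the work to be: (i) set up the reduced cube carefully; (ii) prove it is strongly cocartesian and its initial maps are $(n-m-2)$-ish connected; (iii) invoke Blakers--Massey; (iv) account for the section space over a base of dimension $m$; and (v) run the long exact sequence and check that the resulting bound is exactly $k(n-m-2)-m+1$, from which the stated conclusion about increasing connectivities when $n>m+2$ is immediate.
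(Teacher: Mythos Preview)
Your overall architecture is right: the layer $L_{k+1}$ is a section space over $\binom{M}{k+1}$ with fiber the $(k+1)$st derivative, and the derivative's connectivity is controlled by Blakers--Massey. But you have misidentified both the cube and where the dimension $m$ enters, and this is not just bookkeeping.

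In the paper, the derivative $\Emb(M,\R^n)^{(k+1)}(\emptyset)$ is the total fiber of the $(k+1)$-cube $\calC_{k+1}\colon S\mapsto \Conf(k+1-|S|,\R^n)$ of configuration spaces in $\R^n$. There is \emph{no} handle $T$, no relative-to-immersions fiber, and no dependence on $M$ or $m$ whatsoever at this stage. The cube $\calC_{k+1}$ is not itself strongly cocartesian; the trick is to pass to the $k$-cube $\calC_{k+1}^{\text{fib}}$ of its homotopy fibers in one direction. Those fibers are spaces $\R^n-\{\text{finitely many points}\}$, and each square face of $\calC_{k+1}^{\text{fib}}$ is visibly an ``intersection--union'' square, hence cocartesian. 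The initial maps of $\calC_{k+1}^{\text{fib}}$ are $(n-1)$-connected (fiber a wedge of $(n-1)$-spheres), so Blakers--Massey gives that the derivative is $k(n-2)$-connected. No disjunction lemma, no Morlet, no $(n-m-2)$.

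All of the $m$-dependence comes in a single stroke from the classification of homogeneous functors (\refP{LayerAndDerivative}): the section-space base $\binom{M}{k+1}$ has dimension $(k+1)m$, not $m$, so the layer is $\bigl(k(n-2)-(k+1)m\bigr)$-connected, and the map $T_{k+1}\to T_k$ is one more, which rewrites as $k(n-m-2)-m+1$. Your estimate ``each $a_i\approx n-m-2$'' together with ``section space costs $m$'' is double-counting in one place and undercounting in the other; with your numbers the final answer would not come out to $k(n-m-2)-m+1$. The disjunction/handle-induction machinery you are reaching for is exactly what the paper uses for the \emph{other} theorem (convergence of the tower \emph{to} $\Emb$), not this one.
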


Here is the other main result.
\begin{thm}[Convergence of the Taylor tower to embeddings]\label{T:TowerConvergenceToEmb2mIntro}
Suppose $M$ is a smooth closed manifold of dimension $m$.  Then, if $n>2m+2$, the Taylor tower for $\Emb(M,\R^n)$ converges to this space, i.e.~the map $\Emb(M,\R^n)\to T_{\infty}\Emb(M,\R^n)$ is a weak homotopy equivalence.
\end{thm}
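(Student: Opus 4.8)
The plan is to deduce convergence to $\Emb(M,\R^n)$ from the convergence of the tower (Theorem~\ref{T:TowerConvergenceIntro}) together with an analyticity-type estimate comparing the fibers of the tower to the ``difference'' between embeddings and their degree-$k$ polynomial approximations. The key point is that the map $\Emb(M,\R^n)\to T_k\Emb(M,\R^n)$ is highly connected when $k$ is large relative to the handle dimension of $M$, and that this connectivity also grows without bound once $n>2m+2$. Passing to the inverse limit then forces the map to $T_\infty\Emb(M,\R^n)$ to be a weak equivalence.

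First I would reduce to a statement about the handle decomposition of $M$. Since $M$ is closed of dimension $m$, it is built from finitely many handles of index at most $m$; one proves the estimate for $\Emb(M,\R^n)\to T_k\Emb(M,\R^n)$ by induction on the number of handles, using the fact that an embedding space of a manifold obtained by attaching a handle fits into a homotopy pullback square relating it to embedding spaces of the pieces. The base case is a single handle, i.e.\ (up to isotopy) a disk $D^j$ with $j\le m$, where $\Emb(D^j,\R^n)\simeq \Emb(\ast,\R^n)$ is contractible and the tower is easily analyzed. The inductive step combines the estimate from Theorem~\ref{T:TowerConvergenceIntro} with a Blakers--Massey type argument for the relevant cubical diagram of embedding spaces; this is where the constant $2m+2$ enters, since attaching a handle of index $j\le m$ costs roughly $n-j-2 \ge n-m-2$ in connectivity, and one needs the surplus over the range $n-m-2$ produced in Theorem~\ref{T:TowerConvergenceIntro} to dominate the accumulated deficit $m$ from the handles, which forces $k(n-m-2)-m+1$ to eventually exceed $2m$, equivalently $n>2m+2$. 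Concretely, one shows $\Emb(M,\R^n)\to T_k\Emb(M,\R^n)$ is roughly $\big((k+1)(n-m-2)-m\big)$-connected, or some such bound that is linear in $k$ with positive slope when $n>2m+2$.

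Then I would conclude as follows. The tower
$$
\Emb(M,\R^n)\longrightarrow \cdots \to T_{k+1}\Emb(M,\R^n)\to T_k\Emb(M,\R^n)\to\cdots
$$
has the property that both the maps $T_{k+1}\to T_k$ (by Theorem~\ref{T:TowerConvergenceIntro}) and the maps $\Emb(M,\R^n)\to T_k\Emb(M,\R^n)$ have connectivity tending to $\infty$ with $k$. A standard lim/$\lim^1$ argument for homotopy groups of a tower of fibrations shows that, for each fixed $q$, the groups $\pi_q$ of the tower stabilize and agree with $\pi_q\Emb(M,\R^n)$; hence $\Emb(M,\R^n)\to T_\infty\Emb(M,\R^n)=\holim_k T_k\Emb(M,\R^n)$ induces isomorphisms on all homotopy groups, i.e.\ it is a weak homotopy equivalence. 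One should be slightly careful with basepoint components and with the fact that these are spaces of maps rather than simply connected spaces, but since everything is a space of sections of a fibration the $\holim$ is well behaved and no $\lim^1$ obstruction survives once connectivities diverge.

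The main obstacle I expect is the handle induction estimate for $\Emb(M,\R^n)\to T_k\Emb(M,\R^n)$: setting up the cubical diagrams of embedding spaces associated to a handle decomposition, verifying they are sufficiently co-Cartesian (a Blakers--Massey input), and bookkeeping the connectivities so that the final bound is linear in $k$ with the correct slope. Everything after that—the passage to the homotopy limit—is formal. A secondary subtlety is making sure the estimate is uniform over the (finitely many) handles and does not degrade with the number of handles beyond the fixed additive constant depending on $m$; this is exactly what pins down the threshold $n>2m+2$ rather than something weaker.
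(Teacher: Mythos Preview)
Your outline has the right large-scale shape (handle induction, cubical diagrams of embedding spaces, Blakers--Massey, then pass to the inverse limit), but there is a real gap at the technical core, and a logical circularity in how you propose to use Theorem~\ref{T:TowerConvergenceIntro}.

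First, the circularity. Theorem~\ref{T:TowerConvergenceIntro} controls the connectivity of $T_{k+1}\to T_k$, not of $\Emb(M,\R^n)\to T_k$. You cannot leverage the former to obtain the latter unless you already know the tower converges to $\Emb(M,\R^n)$, which is exactly the statement you are trying to prove. In the paper, the dependence goes the other way: Theorem~\ref{T:TowerConvergenceToEmb2mIntro} is proved \emph{without} appealing to Theorem~\ref{T:TowerConvergenceIntro}; only afterwards are the two combined (via \refP{ConnectivityAssumeConvergence}) to sharpen the connectivity estimate and obtain Theorem~\ref{T:TowerConvergenceToEmbWeakIntro}.

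Second, and more substantively, you have not identified the mechanism that produces the cocartesian estimates needed to feed into Blakers--Massey. In the paper this is \refT{TowerConvergenceToEmbCartPQ}: one punches $k+1$ holes $Q_1,\dots,Q_{k+1}$ in the handlebody, takes fibers of the resulting $(k+1)$-cube of embedding spaces in one direction (using the Isotopy Extension Theorem to identify these fibers as embeddings of one piece into the complement of the images of the others), and then shows each face $\partial^S$ of the fiber cube is $(|S|(n-2l)-1)$-cocartesian by a \emph{general position / transversality} argument. That argument is the entire content: one adjoins $S^j\to\Emb(Q,Q_S)$ to a map $S^j\times Q\to\R^n$, takes an $|S|$-fold diagonal product, and makes it transverse to $\prod_{i\in S}e(Q_i)$. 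The codimension count gives emptiness of the preimage exactly when $j<|S|(n-2l)$, and the factor of $2$ in ``$2l$'' (hence ``$2m$'') comes from the fact that \emph{both} the domain $Q$ and the obstacle $Q_i$ have dimension~$l$. Plugging these face estimates into \refT{GeneralB-M} gives that the cube is $(k(n-2l-2)+1)$-cartesian; handle induction then replaces $l$ by the handle dimension and ultimately by $m$.

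This also explains why your numerology is off. The connectivity of $\Emb(M,\R^n)\to T_k\Emb(M,\R^n)$ obtained here is $k(n-2m-2)+1$, with slope $n-2m-2$; that is why the threshold is $n>2m+2$. Your claimed bound $(k+1)(n-m-2)-m$ has slope $n-m-2$ and would already go to infinity for $n>m+2$, which is the stronger Goodwillie--Klein--Weiss statement and is \emph{not} accessible by the elementary transversality argument above. Your sentence ``forces $k(n-m-2)-m+1$ to eventually exceed $2m$, equivalently $n>2m+2$'' is not an equivalence; for large $k$ that inequality holds whenever $n>m+2$.

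In short: drop the appeal to Theorem~\ref{T:TowerConvergenceIntro}, and replace the unspecified ``Blakers--Massey type argument'' with the concrete disjunction result (\refT{TowerConvergenceToEmbCartPQ}) proved via Isotopy Extension plus transversality. The handle induction and the passage to $T_\infty$ are then exactly as you describe.
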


Theorems \ref{T:TowerConvergenceIntro} and \ref{T:TowerConvergenceToEmb2mIntro} will immediately yield the following.

\begin{thm}
\label{T:TowerConvergenceToEmbWeakIntro}
Suppose $M$ is a smooth closed manifold of dimension $m$ and suppose $n>2m+2$.  Then, for $k\geq 0$, the map 
$$
\Emb(M,\R^n)\longrightarrow T_{k}\Emb(M,\R^n)
$$
is
$$
\text{$(k(n-m-2)-m+1)$-connected.}
$$
\end{thm}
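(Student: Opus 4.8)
The plan is to deduce this from the two preceding theorems by factoring the map through the homotopy inverse limit of the Taylor tower. Write $c_j = j(n-m-2)-m+1$. Since $M$ is closed it is in particular without boundary, so the hypotheses of both \refT{TowerConvergenceIntro} and \refT{TowerConvergenceToEmb2mIntro} are satisfied; and since $n>2m+2$ we also have $n>m+2$, so the $c_j$ are strictly increasing and $c_j\to\infty$. The map in the statement is the composite
$$
\Emb(M,\R^n)\longrightarrow T_\infty\Emb(M,\R^n)\longrightarrow T_k\Emb(M,\R^n),
$$
where the second arrow is the canonical projection out of $T_\infty\Emb(M,\R^n)=\holim_j T_j\Emb(M,\R^n)$.

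First I would apply \refT{TowerConvergenceToEmb2mIntro}: since $n>2m+2$, the first arrow is a weak homotopy equivalence, so the connectivity of the composite equals that of the projection $T_\infty\Emb(M,\R^n)\to T_k\Emb(M,\R^n)$. It therefore suffices to show this projection is $c_k$-connected.

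For that I would combine \refT{TowerConvergenceIntro} — which says $T_{j+1}\Emb(M,\R^n)\to T_j\Emb(M,\R^n)$ is $c_j$-connected for every $j$ — with the general principle that the homotopy inverse limit of a tower with increasing connectivities is well behaved: for a tower $\cdots\to X_{j+1}\to X_j\to\cdots$ in which $X_{j+1}\to X_j$ is $c_j$-connected and $c_j\to\infty$, the projection $\holim_j X_j\to X_k$ is $\big(\min_{j\ge k}c_j\big)$-connected. One proves this by replacing the tower with an equivalent tower of fibrations, observing that since $c_j\to\infty$ each tower $\{\pi_i(X_j)\}_j$ is eventually constant, so the relevant $\lim^1$ terms vanish and $\pi_i(\holim_j X_j)=\lim_j\pi_i(X_j)$, and then checking degreewise that $\lim_{j\ge k}\pi_i(X_j)\to\pi_i(X_k)$ is an isomorphism for $i<c_k$ and an epimorphism for $i=c_k$, using that $\pi_i(X_{j+1})\to\pi_i(X_j)$ is an isomorphism whenever $i<c_j$. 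Applying this with $X_j=T_j\Emb(M,\R^n)$, and noting $\min_{j\ge k}c_j=c_k$ by monotonicity, completes the argument.

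There is no new geometric input here; the result is a formal corollary of the two theorems. The only step requiring genuine care is the passage to the homotopy inverse limit, i.e.\ confirming that the connectivity estimate of \refT{TowerConvergenceIntro} is not degraded in the limit. This is precisely where the hypothesis $n>m+2$ (equivalently $c_j\to\infty$) is used: it makes the tower of homotopy groups Mittag--Leffler and thereby kills the $\lim^1$ obstruction. Accordingly, I would either prove the tower lemma stated above or quote it from a standard reference, and present the rest as a short deduction.
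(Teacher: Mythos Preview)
Your argument is correct and is essentially the same as the paper's: factor through $T_\infty\Emb(M,\R^n)$, use \refT{TowerConvergenceToEmb2mIntro} to make the first map a weak equivalence, and use \refT{TowerConvergenceIntro} to bound the connectivity of the projection $T_\infty\to T_k$. If anything, you are more careful than the paper about the passage to the homotopy inverse limit --- the paper simply treats $T_\infty\to T_k$ as a composition of the maps $T_{j+1}\to T_j$ and invokes \refP{CompositionConnectivity}(1), whereas you explicitly handle the $\lim^1$ issue via Mittag--Leffler.
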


While \refT{TowerConvergenceIntro} is the best known result about the connectivities of the maps between the stages in the Taylor tower, \refT{TowerConvergenceToEmbWeakIntro} is not the best known result about the ``rate of convergence'', namely about the connectivities between $\Emb(M,\R^n)$ and the stages.  The best such statement is due to Goodwillie and Weiss \cite{GW:EI2} who build on work of Goodwillie and Klein \cite{GK}.  The improvements over \refT{TowerConvergenceToEmbWeakIntro} are that the result is in fact true for 
\begin{itemize}
\item $n>m+2$;
\item any smooth manifold $N$ of dimension $n$ in place of $\R^n$;
\item smooth manifolds $M$ and $N$ with boundary (but embeddings must respect the boundary in a suitable sense); and
\item any interior of a compact codimension zero handlebody in a compact $M$, in which case the handle dimension (the highest dimension of a handle necessary to build the handlebody) replaces $m$.
\end{itemize}
In addition, the Goodwillie-Weiss proof does not require an a priori knowledge of the convergence of the Taylor tower. 

Our weaker version, \refT{TowerConvergenceToEmbWeakIntro}, is what we could prove without bringing in large amounts of machinery.  The proof uses cubical diagram and general position techniques, and we feel this is the minimal set of tools needed to obtain any kind of convergence result of this sort.  To improve it in most of the ways listed above requires much more background in differential topology and homotopy theory. We will say more about what is needed for such improvements in \refS{Further}.  

One improvement, however, that would be fairly easy to perform would be to upgrade the result from $\R^n$ to an arbitrary smooth manifold $N$ without boundary.  The only places we use $\R^n$ are the proofs of Propositions \ref{P:ConfFibersCubeCocartesian} and \ref{P:DerivativeConnectivity}.  They both utilize some well-known facts about projections of configuration spaces in $\R^n$ (see \refEx{Configurations}), but similar results also hold for configurations in arbitrary manifolds (see, for example, \cite[Theorem 10.3.11]{MV:Cubes}).  Nevertheless, we felt that staying grounded in $\R^n$ would be of benefit to the reader, especially in light of the fact that most of the applications of manifolds calculus of functors in recent years have been for embeddings into $\R^n$.

Even though \refT{TowerConvergenceToEmbWeakIntro} is weaker than the best known rate of convergence statement, it is better than any such weaker result that appears in the literature.  What is sometimes called the ``weak convergence'' has the connectivity of the map $\Emb(M,\R^n)\to T_{k}\Emb(M,\R^n)$ as $(k(n-2m-2)-m+1)$ (and we will in fact prove this on the way to deducing \refT{TowerConvergenceToEmb2mIntro}).  Our \refT{TowerConvergenceToEmbWeakIntro} has the best known connectivity, but the hypothesis $n>2m+2$ is not optimal (as mentioned above, $n>m+2$ is the best known one).  To see why we were able to deduce a better result than the existing weak convergence statements, see the discussion following the proof of \refT{TowerConvergenceToEmbWeak}.

We should emphasize, however, that none of our results or proofs are new in any sort of essential way.  For the most part, they are simplifications and reorganizations of existing work that can be found in \cite{G:ExcisionDiffeo, GW:EI2, M:MfldCalc, MV:Cubes}.  The novelty is mostly in the streamlining and, hopefully, in increased accessibility to these deep results in functor calculus.

Theorems \ref{T:TowerConvergenceIntro} and \ref{T:TowerConvergenceToEmbWeakIntro} suffice for many of the existing applications of manifold calculus, and we hope that having proofs of these results that are as accessible as possible might generate further applications.  
For example, the authors plan to extend these results to the space of $r$-immersions (immersions with up to $r-1$ self-intersections) of $M$ in $\R^n$, and believe that the same strategy of proof will carry over to that situation.  The investigation of the low stages of the Taylor tower for $r$-immersions was already undertaken in \cite{SSV:r-immersions}, and that paper indeed utilizes many of the techniques that appear here.


\subsection{Required background}

Throughout the paper, we try to bring in as little machinery as possible.  The main results are ultimately statements about cubical diagrams and their homotopy (co)limits, so we will review some of the most basic features of these topics in \refS{Cubes}.  However, this review is not exhaustive; we will in particular assume some familiarity with homotopy (co)limits of punctured cubes.  The definition of homotopy (co)limits for these kinds of diagrams is fortunately much simpler and more intuitive than in the case of general diagrams.  The reader should consult Sections 5.3 and 5.7 of \cite{MV:Cubes} for details, but a substantial detour into that material should not be necessary.  The culmination of the cubical diagrams story are the Blakers-Massey Theorems, recalled here as Theorems \ref{T:B-M} and \ref{T:GeneralB-M}.  We will not need anything about these results beyond their statements.

Even though the underlying theory here is calculus of functors, we will require almost no category theory (this is essentially because we will only be concerned with the value of a certain functor on a single object; see comments at the beginning of \refS{EmbeddingsTower}).  Familiary with the basic language of functors and natural transformations will help the reader, but is not a prerequisite for understanding the paper.  The only major result from manifold calculus of functors we will implicitly use is the classification of homogeneous functors, but the reader need not be familiar with its statement.  In fact, we will not even recall it here since of interest to us will be only one of its consequences, \refP{LayerAndDerivative}.  Readers interested in learning more about manifold calculus of functors should look at the foundational papers \cite{GW:EI2, W:EI1} as well as the expository work  \cite{M:MfldCalc} and \cite[Section 10.2]{MV:Cubes}.

The reader should be comfortable with (spaces of) embeddings and should have some basic dexterity with differential topology, including smooth manifolds, handle decomposition, transversality, and general position.  We will give some references for background reading in the places where we invoke these notions.


\subsection{Organization of the results}

Here is an organizational chart of the results to help the reader navigate the paper:
{\small
$$
\xymatrix{
 \text{\refP{ConfFibersCubeCocartesian}} \ar[d]  &  &  \\
 \text{\refP{DerivativeConnectivity}} \ar[d] & 
\text{\refP{ConnectivityAssumeConvergence}}\ar[d] & \text{\refT{TowerConvergenceToEmbCartPQ}} \ar[d]\\
  \text{\bf \refT{TowerConvergence}=\refT{TowerConvergenceIntro}}\ar[ur] & \text{\bf \refT{TowerConvergenceToEmbWeak}=\refT{TowerConvergenceToEmbWeakIntro}}& \text{\bf \refT{TowerConvergenceToEmb2m}=\refT{TowerConvergenceToEmb2mIntro}} \ar[l]\\
\text{\refP{LayerAndDerivative}}\ar[u] & &
}
$$
}

The three theorems in bold are the main results.  \refT{TowerConvergence}, which is the same as \refT{TowerConvergenceIntro} above (with a shift in index), says that the Taylor tower for embeddings converges (to something, but we do not know to what at that point).  \refS{TowerConverge} is dedicated to the proof of that statement.  This then becomes one of the principal inputs, via \refP{ConnectivityAssumeConvergence}, into \refT{TowerConvergenceToEmbWeak}, which is the same as \refT{TowerConvergenceToEmbWeakIntro} above.  As mentioned before, that result can be interpreted as providing the ``rate of convergence'' of the Taylor tower.  The other input into \refT{TowerConvergenceToEmbWeak} is \refT{TowerConvergenceToEmb2m}, a version of which appears above as \refT{TowerConvergenceToEmb2mIntro}.  The main ingredient for that result is \refT{TowerConvergenceToEmbCartPQ}, and this is in some sense the most substantial result of the paper.


\subsection{Acknowledgments}
The second author would like to thank the Simons Foundation for its support. 


\section{Cubical diagrams and the Blakers-Massey Theorem}\label{S:Cubes}


In this section, we provide the necessary background on cubical diagrams.  As mentioned before, the exposition is self-contained except for the definition of the homotopy (co)limit of a punctured diagram.  A reader familiar with general homotopy (co)limits thus has all the necessary prerequisites, and a reader who is not should consult \cite[Sections 5.3 and 5.7]{MV:Cubes} (or the foundational work on cubical diagrams \cite{CalcII}).  
Homotopy colimits of punctured cubes will be only be needed when we invoke the Blakers-Massey Theorems (Theorems \ref{T:B-M} and \ref{T:GeneralB-M}) since their hypotheses ask how \emph{cocartesian} certain cubes are (see \refD{CartesianCube}), and this is a concept that uses homotopy colimits.

We will also on occasion use the language of categories and functors, but we will do so lightly enough so that it is possible to read this section without prior experience with those topics.


We begin by recalling the standard notion of a homotopy fiber.  For spaces $A$ and $B$, we will denote by $\Map(A,B)$ the space of maps from $A$ to $B$ topologized using the compact-open topology.

\begin{defin}\label{D:Hofiber}\ 
The \emph{mapping path space} $P_f$ of a map $f\colon X\to Y$ is the subspace of $X\times \Map(I,Y)$ given by 
$$
P_f=\{(x,\alpha)\colon \alpha(0)=f(x)  \}.
$$
The \emph{homotopy fiber} $\hofiber_y f$ of a map $f\colon X\to Y$ over $y\in Y$ is the fiber of the map $P_f\to Y$ which sends $(x,\alpha)$ to $\alpha(1)$.
\end{defin}

When the basepoint $y$ is understood from the context, we will just write $\hofiber f$ instead of $\hofiber_y f$.

Here is a standard result about homotopy fibers. For details, see \cite[Section 2.2]{MV:Cubes}.

\begin{prop}\label{P:FibrationHofiber}
If $f$ is a fibration, then its fiber and homotopy fiber are homotopy equivalent.
\end{prop}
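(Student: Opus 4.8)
The plan is to show that the natural inclusion $i\colon F\to\hofiber_y f$, $i(x)=(x,c_y)$, is a homotopy equivalence; here $F=f^{-1}(y)$ is the fiber and $c_y$ denotes the constant path at $y$, and $i$ is well defined since $(x,c_y)\in P_f$ lies over $y$ whenever $f(x)=y$. First I would use the hypothesis that $f$ is a fibration in the concrete form of a path-lifting function: a Hurewicz fibration $f$ admits a continuous map $\Lambda\colon P_f\to\Map(I,X)$ with $\Lambda(x,\alpha)(0)=x$ and $f(\Lambda(x,\alpha)(s))=\alpha(s)$ for all $s$. Restricting $\Lambda$ to $\hofiber_y f\subseteq P_f$ and evaluating at $s=1$ gives a continuous map $\rho\colon\hofiber_y f\to X$, $\rho(x,\alpha)=\Lambda(x,\alpha)(1)$, which lands in $F$ because $f(\rho(x,\alpha))=\alpha(1)=y$.

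Then I would check that $\rho$ is a homotopy inverse to $i$. For $\rho\circ i\simeq\mathrm{id}_F$: the path $s\mapsto\Lambda(x,c_y)(s)$ projects under $f$ to the constant path at $y$, so it stays in $F$, and it runs from $x$ to $\rho(i(x))$; letting $x$ vary yields a continuous homotopy $F\x I\to F$ from $\mathrm{id}_F$ to $\rho\circ i$. For $i\circ\rho\simeq\mathrm{id}_{\hofiber_y f}$: given $(x,\alpha)\in\hofiber_y f$, put $\gamma=\Lambda(x,\alpha)$ and, for $t\in I$, let $\alpha_t$ be the reparametrized tail $\alpha_t(s)=\alpha(t+s(1-t))$; then $\alpha_t(0)=\alpha(t)=f(\gamma(t))$ and $\alpha_t(1)=y$, so $(\gamma(t),\alpha_t)\in\hofiber_y f$, and $t\mapsto(\gamma(t),\alpha_t)$ is a path from $(x,\alpha)$ at $t=0$ to $(\rho(x,\alpha),c_y)=i(\rho(x,\alpha))$ at $t=1$, depending continuously on $(x,\alpha)$.

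Since the statement is entirely standard, there is no genuine obstacle here; the only points needing a little care are that ``fibration'' should be read as Hurewicz fibration, so that a continuous path-lifting function $\Lambda$ is available (for a Serre fibration one would only get a weak homotopy equivalence, which would in any case suffice for the applications), and the minor reparametrization bookkeeping in the homotopy for $i\circ\rho$. Alternatively, one can argue structurally: $x\mapsto(x,c_{f(x)})$ is a homotopy equivalence $X\to P_f$ that commutes with $f$ and with the fibration $P_f\to Y$, $(x,\alpha)\mapsto\alpha(1)$, hence is a fiber homotopy equivalence by Dold's theorem, and in particular restricts to a homotopy equivalence on the fibers over $y$, namely $F$ and $\hofiber_y f$.
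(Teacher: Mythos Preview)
Your proof is correct. The paper does not actually supply a proof of this proposition; it simply states the result as standard and refers the reader to \cite[Section~2.2]{MV:Cubes} for details. So there is nothing to compare against---you have supplied exactly the kind of argument the paper omits. Your remark distinguishing Hurewicz from Serre fibrations is apt, and either of your two approaches (the explicit path-lifting argument or the appeal to Dold's theorem) would be an acceptable way to fill in the omitted proof.
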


We will often write ``(homotopy) fiber'' when the map whose homotopy fiber we are considering is a fibration. 

Connectivity of spaces and maps will be central throughout.  For more on this topic, including the proof of the \refP{CompositionConnectivity} below, see \cite[Section 2.6]{MV:Cubes}.

\begin{defin}\label{D:k-connectedSpace}\ 
\begin{itemize}
\item A nonempty space $X$ is \emph{$k$-connected} if $\pi_i(X,x)=0$ for $0\leq i\leq k$ and for all choices of basepoint $x\in X$.  An infinitely connected space is said to be \emph{weakly contractible}.
\item A map $f\colon X\to Y$ is \emph{$k$-connected} if its homotopy fiber (over any point $y\in Y$) is $(k-1)$-connected.  Equivalently, if $X\neq\emptyset$, $f$ is $k$-connected if, for all $x\in X$, the induced map
$$
f_*\colon \pi_i(X,x)\longrightarrow \pi_i(Y,f(x))
$$
is an isomorphism for all $i<k$ and a surjection for $i=k$.  An infinitely connected map is a \emph{weak equivalence}.
\end{itemize}
\end{defin}

\begin{example}\label{Ex:SphereWedgeConnectivity}
The sphere $S^k$ is $(k-1)$-connected.  If $X_1$ is $k_1$-connected and $X_2$ is $k_2$-connected, then the wedge $X_1\vee X_2$ is $\min\{k_1,k_2\}$-connected.  In particular, $\vee_l S^k$ is $(k-1)$-connected for any $l\geq 1$.
\end{example}

\begin{example}\label{Ex:Configurations}
For $X$ a space, define the \emph{configuration space of $k$ points in $X$} to be
$$
\Conf(k,X)=\{(x_1, x_2, ..., x_{k})\in X^{k} \colon x_i\neq x_j \text{ for } i\neq j\}.
$$
There are projection maps
\begin{equation}\label{E:ConfProjection}
\Conf(k,X) \longrightarrow \Conf(k-1,X)
\end{equation}
given by omitting a point.  When $X$ is a manifold, these projections are fibrations whose fiber is $X-\{b_1,...,b_{k-1}\}$, where $(b_1,...,b_{k-1})$ is a basepoint in $\Conf(k-1,X)$ \cite{FN:ConfFibration}.  When $X=\R^n$, this fiber is homotopy equivalent to a wedge of spheres, i.e.~there is a fibration sequence
\begin{equation}\label{E:ConfFibration}
\bigvee_{k-1}S^{n-1}\longrightarrow \Conf(k,\R^n) \longrightarrow \Conf(k-1,\R^n).
\end{equation}
From \refEx{SphereWedgeConnectivity}, the sphere $S^{n-1}$ is $(n-2)$-connected, and this means, by  \refD{k-connectedSpace} and \refP{FibrationHofiber}, that the projection map between the configuration spaces in \eqref{E:ConfFibration} is $(n-1)$-connected.
\end{example}

The following is immediate from the definitions.

\begin{prop}\label{P:CompositionConnectivity}
For maps $f\colon X\to Y$ and $g\colon Y\to Z$, 
\begin{enumerate}
\item If $f$ and $g$ are $k$-connected, then $g\circ f$ is $k$-connected;
\item If $f$ is $(k-1)$-connected and $g\circ f$ is $k$-connected, then $g$ is $k$-connected;
\item If $g$ is $(k+1)$-connected and $g\circ f$ is $k$-connected, then $f$ is $k$-connected.
\end{enumerate}
\end{prop}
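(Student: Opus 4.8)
The plan is to reduce everything to the homotopy-group characterization of $k$-connectivity recorded in the second bullet of \refD{k-connectedSpace}: for a nonempty space $A$, a map $h\colon A\to B$ is $k$-connected exactly when $h_*\colon\pi_i(A)\to\pi_i(B)$ is an isomorphism for all $i<k$ and a surjection for $i=k$, for every choice of basepoint. One checks that in each of the three parts the hypotheses force the source of the map appearing in the conclusion to be nonempty, so this characterization is available; in what follows I suppress basepoints, with the understanding that each claim is checked basepoint by basepoint and with the usual caveat that in degree $0$ one is manipulating pointed sets rather than groups. Note also that $(g\circ f)_*=g_*\circ f_*$. Part (1) is then immediate: for $i<k$ both $f_*$ and $g_*$ are isomorphisms, hence so is the composite $(g\circ f)_*$, and for $i=k$ both are surjections, hence so is the composite.

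For part (2), I would check the two requirements on $g_*$ separately. Surjectivity of $g_*$ for $i\leq k$ is immediate, since the composite $g_*\circ f_*=(g\circ f)_*$ is surjective in that range; this in particular covers $i=k$. For injectivity of $g_*$ when $i<k$: the $(k-1)$-connectivity of $f$ makes $f_*$ surjective for $i\leq k-1$, so given $\beta\in\pi_i(Y)$ with $g_*(\beta)=0$ we may write $\beta=f_*(\alpha)$; then $(g\circ f)_*(\alpha)=g_*(\beta)=0$, and since $(g\circ f)_*$ is injective for $i<k$ we conclude $\alpha=0$, whence $\beta=0$. Thus $g_*$ is an isomorphism for $i<k$ and a surjection for $i=k$, i.e.\ $g$ is $k$-connected.

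For part (3), I would dually check the requirements on $f_*$. Injectivity of $f_*$ for $i<k$ follows at once from injectivity of $(g\circ f)_*=g_*\circ f_*$ in that range. For surjectivity of $f_*$ when $i\leq k$: given $\beta\in\pi_i(Y)$, the surjectivity of $(g\circ f)_*$ for $i\leq k$ provides $\alpha$ with $g_*(f_*(\alpha))=(g\circ f)_*(\alpha)=g_*(\beta)$; since $g$ is $(k+1)$-connected, $g_*$ is injective for $i\leq k$, so $f_*(\alpha)=\beta$. Hence $f_*$ is an isomorphism for $i<k$ and a surjection for $i=k$, i.e.\ $f$ is $k$-connected.

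The argument is purely formal, so there is no real obstacle; the only thing that demands attention is bookkeeping the strict versus non-strict inequalities, so that each of $f_*$, $g_*$, $(g\circ f)_*$ is invoked only in the degree range where it is known to be an isomorphism or a surjection, together with the routine low-degree caveats about $\pi_0$ and basepoints. As an alternative, one could extract all three statements simultaneously from the long exact sequence of homotopy groups of the homotopy fiber sequence $\hofiber(f)\to\hofiber(g\circ f)\to\hofiber(g)$, translating the hypotheses into connectivity of two of these three homotopy fibers and reading off the third; I find the direct homotopy-group argument above slightly cleaner for an exposition built on minimal machinery.
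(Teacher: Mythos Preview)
Your argument is correct and is exactly the kind of verification the paper has in mind: the paper does not give a proof at all, merely declaring the statement ``immediate from the definitions'' (meaning precisely the homotopy-group characterization in \refD{k-connectedSpace} that you unpack). Your careful bookkeeping of isomorphism versus surjection ranges, and your remark about the alternative via the fiber sequence $\hofiber(f)\to\hofiber(g\circ f)\to\hofiber(g)$, both flesh out what the paper leaves implicit.
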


\begin{rem}\label{R:CompositionConnectivity}
Of special interest to us will be the application of \refP{CompositionConnectivity}(1) in the proof of \refP{ConnectivityAssumeConvergence} where $f$ will have higher connectivity than $g$, or will even be infinitely connected, i.e.~an equivalence, in which case $g\circ f$ has the same connectivity as $g$.
\end{rem}


Let $\underline k=\{1,2,...,k\}$ and let $\calP(\underline k)$ be the set of subsets of $\underline k$.  Let $\calP_0(\underline k)$ be the set of all nonempty subsets of $\underline k$ and let $\calP_1(\underline k)$ be the set of all proper subsets of $\underline k$.  All three sets can be regarded as categories (posets) with respect to inclusions.

Let $\Top$ be the category of topological spaces with maps as morphisms.

\newpage

\begin{defin}\label{D:Cube}\ 
\begin{itemize}
\item A \emph{$k$-cube}, or a \emph{cubical diagram of dimension $k$}, is a functor
\begin{align*}
\calX\colon \calP(\underline k) & \longrightarrow \Top 
\\
S &\longmapsto X_S
\end{align*}
which means that to each $S\in\calP(\underline k)$, $\calX$ assigns a space $X_S$ and to each inclusion $T\hookrightarrow S$, $\calX$ assigns a map
$$
X_T\longrightarrow X_S
$$ such that the square 
$$
\xymatrix{
T \ar[r]\ar[d]_-{\chi(T)} &  S \ar[d]^-{\chi(S)} \\
X_T \ar[r] &  X_S
}
$$
commutes.
\item A \emph{punctured $k$-cube}, or a \emph{punctured cubical diagram of dimension $k$}, is the same except the domain is $\calP_0(\underline k)$ or $\calP_1(\underline k)$.
\end{itemize}
\end{defin}

We will often abbreviate $X_S$ by writing the subscript without braces and commas, so, for example, $X_{\{1,2,3\}}$ will be written simply as $X_{123}$.

A cubical diagram can be represented in the shape of a cube (hence the name), so, for example, a 3-cube is a diagram
$$
\xymatrix@=10pt{
   X_\emptyset \ar[dd]\ar[dr]
    \ar[rr]
   &           &   X_1 \ar'[d][dd]           \ar[dr]  &                  \\
          &  X_2 \ar[dd] \ar[rr] &             & X_{12}   \ar[dd] \\
   X_3 \ar'[r][rr] \ar[dr] &        &   X_{13}
\ar[dr] &                   \\
          &   X_{23} \ar[rr]     &                    &   X_{123}
}
$$

Notice that there are two kinds of punctured cubes, either missing $X_\emptyset$, the ``initial'' space, or $X_{\underline k}$, the ``final'' space.  It will always be clear from the context which one we mean.

\begin{defin}\label{D:Face}
For $\calX$ a $k$-cube and $T\subset S\subset \underline k$, the \emph{face $\del^S_T\calX$} is the $(|S|-|T|)$-cube given by the assignment $U\mapsto X_{U}$ for $U\subset S-T$.
\end{defin}

Faces of $\calX$ in particular always contain $X_\emptyset$. 
The two types of faces we will encounter here are square faces and those where $T=\emptyset$.  The latter will be abbreviated by $\del^S\calX$ (instead of $\del^S_\emptyset\calX$).  

\begin{defin}\label{D:MapOfCubes}
Suppose $\calX$ and $\calY$ are $k$-cubes or punctured $k$-cubes.  Then a \emph{map} or a \emph{natural transformation} $F\colon\calX\to\calY$ of (punctured) cubes $\calX$ and $\calY$ is a collection of maps
$$
f_S\colon X_S\longrightarrow Y_S
$$
for all $S\subset \underline k$ such that, for all $S\subset T$, the square
$$
\xymatrix{
X_S \ar[r]^-{f_S}\ar[d] &  Y_S \ar[d] \\
X_T \ar[r]^-{f_T} &  Y_T
}
$$
commutes.  
\end{defin}
Another way to state the above definition is to say that $F$ produces a $(k+1)$-cube where $S\mapsto X_S$ if $k+1\notin S$ and $S\mapsto Y_{S -\{k+1\}}$ if $k+1\in S$.

Using this definition, one can also think of a $k$-cube as a map of $(k-1)$-cubes ``in $k$ directions''.  For example, the square
$$
\xymatrix{
X_\emptyset \ar[r]\ar[d] &  X_1 \ar[d] \\
X_2 \ar[r] &  X_{12}
}
$$
is a map $F$ of 1-cubes in two ways:
$$
\xymatrix{
X_\emptyset \ar[rr] & \ar[d]^-F & X_1 \\
X_2 \ar[rr] &  & X_{12}
}
\ \ \ \ \ \ \ \ \ 
\ \ \ 
\xymatrix{
X_\emptyset \ar[dd]  & X_1 \ar[dd]\\
  \ar[r]^-F  &  \\
X_2   & X_{12}
}
$$
With this in hand, we can then also consider the $(k-1)$-cube obtained by taking homotopy fibers, in some direction, of a $k$-cube $\calX$ considered as a map of two $(k-1)$-cubes.  The homotopy fibers should be taken over basepoints that are compatible in the sense that each map in the cube sends basepoint to basepoint.  The maps in the cube of homotopy fibers are induced by the maps in $\calX$; we leave it to the reader to work out some simple cases of this (or consult \cite{MV:Cubes}).

We can keep going with taking homotopy fibers until we get to a 0-cube, i.e.~a space.  This space is called the \emph{iterated (homotopy) fiber}.  Since there are various directions in which we can take the homotopy fibers at each stage, one can a priori reach different total fibers, but this is not the case; taking homotopy fibers in different directions produces homotopy equivalent total fibers (see proof of Proposition 5.5.4 in \cite{MV:Cubes}).  For example, the square diagram above yields two iterated fibers, $\hofiber \gamma$ and $\hofiber h$, where $\gamma$ and $h$ are maps induced on homotopy fibers of pairs of parallel maps in the square:
$$
\xymatrix{
\hofiber\gamma \ar[r] & \hofiber \alpha \ar[r]^-\gamma\ar[d] & \hofiber \beta\ar[d] \\
& X_\emptyset \ar[r]^-f\ar[d]_-\alpha  & X_1\ar[d]^-\beta\\
& X_2\ar[r]^-g & X_{12}
}
\ \ \ \ \ \ \ \ \ \ \ \ \ \ \ 
\xymatrix{
\hofiber h \ar[d] & & \\
\hofiber f \ar[r] \ar[d]^-h & X_\emptyset \ar[r]^-f\ar[d]_-\alpha  & X_1\ar[d]^-\beta  \\
\hofiber g \ar[r] & X_2\ar[r]^-g & X_{12}
}
$$
But then we have that $\hofiber \gamma\simeq \hofiber h$.

We will relate the iterated fiber to another important idea, the \emph{total fiber}, shortly.

Of particular importance is the \emph{homotopy limit} (or \emph{homotopy pullback}) of a punctured cube missing the initial space and the \emph{homotopy colimit} (or \emph{homotopy pushout}) of a punctured cube missing the final space.  We will denote these spaces by
$$
\underset{S\in \calP_0(\underline k)}{\holim} X_S
\ \ \ 
\text{and}
\ \ \ 
\underset{S\in \calP_1(\underline k)}{\hocolim} X_S,
$$
respectively, and will sometimes abbreviate them as $\holim\calX$ and $\hocolim\calX$ with the understanding that when we take the homotopy limit of $\calX$, then this is a punctured cube missing the initial space, and if we take the homotopy colimit of $\calX$, then this is a punctured cube missing the final space.

Homotopy limit and colimit are subspaces of the products of spaces of maps $\Delta^{|S|}\to X_S$ and of the disjoint union of $\Delta^{k-|S|}\times X_S$, respectively, and can be thought of as ``fattened up'' limit (pullback) and colimit (pushout).  This fattening affords them homotopy invariance (which limit and colimit do not have) and we will take advantage of this freely.  Namely, we will habitually replace diagrams by homotopy or weakly equivalent ones.

For detailed definitions and examples of homotopy (colimits) of punctured cubes, see Sections 5.3 and 5.7 of \cite{MV:Cubes}.

A punctured $(\geq\! 1)$-cube $\calX$ contains punctured subcubes $\calY$  of lower dimensions.  For example, the punctured 3-cube
$$
\xymatrix@=10pt{
   &           &   X_1 \ar'[d][dd]           \ar[dr]  &                  \\
          &  X_2 \ar[dd] \ar[rr] &             & X_{12}   \ar[dd] \\
   X_3 \ar'[r][rr] \ar[dr] &        &   X_{13}
\ar[dr] &                   \\
          &   X_{23} \ar[rr]     &                    &   X_{123}
}
$$
contains the punctured 2-cube $X_1\to X_{12}\leftarrow X_2$.

Then one gets induced maps
\begin{equation}\label{E:SubdiagramMap}
\holim \calX \longrightarrow \holim\calY
\end{equation}
which are in fact fibrations (see for example \cite[Theorem 8.6.1]{MV:Cubes}).  This will be useful on several occasions later.

If a space $X$ maps into a punctured cube $\calX$ (by which we mean that there are maps $X\to X_S$ for all $S$ and all the evident triangles commute), then there are induced maps
\begin{align}
X & \longrightarrow \holim\calX, \label{E:SpaceToDiagramMapHolim}\\
X & \longrightarrow \hocolim\calX. \label{E:SpaceToDiagramMapHocolim}
\end{align}

There are also canonical maps
\begin{align*}
a(\calX)\colon & X_\emptyset \longrightarrow \underset{S\in \calP_0(\underline k)}{\holim} X_S \\
b(\calX)\colon &  \underset{S\in \calP_1(\underline k)}{\hocolim} X_S \longrightarrow X_{\underline k}
\end{align*}
due to the fact that $X_\emptyset$ and $X_{\underline k}$ map to and admit maps from the rest of the cube, respectively.  A good picture to keep in mind for $a(\calX)$ is 
\begin{equation}\label{E:a(X)Diagram}
\xymatrix{
X_\emptyset\ar[rr]\ar[dd]\ar[dr]^-{a(\calX)} & & X_1\ar[dd] \\
&  \underset{S\in\calP_0(\underline 2)}{\holim} X_S\ar[ur]\ar[dl]  &   \\
X_2\ar[rr] & & X_{12}
 & 
}
\end{equation}
A similar ``dual'' diagram can be drawn for $b(\calX)$.

\begin{defin}\label{D:CartesianCube}\ 
\begin{itemize}
\item A $k$-cube is \emph{$c$-cartesian} if $a(\calX)$ is $c$-connected.  If $c=\infty$, the cube is \emph{cartesian}.  If each face of the cube of dimension $\geq 2$ is cartesian, then the cube is \emph{strongly cartesian}.
\item A $k$-cube is  \emph{$c$-cocartesian} if $b(\calX)$ is $c$-connected.  If $c=\infty$, the cube is \emph{cocartesian}.  If each face of the cube of dimension $\geq 2$ is cocartesian, then the cube is \emph{strongly cocartesian}.
\end{itemize}
\end{defin}

One reference for the proof of the following is \cite[Proposition 5.4.12]{MV:Cubes}.  Recall from earlier that a cube can be regarded as a map of cubes of a lower dimension.

\begin{prop}\label{P:FiberCartesian}
A $k$-cube is $c$-cartesian if and only if the $(k-1)$-cube of its homotopy fibers (taken in some direction and over compatible basepoints) is $c$-cartesian.
\end{prop}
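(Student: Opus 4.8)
The plan is to prove both implications by an induction on $k$, using the fact that a $k$-cube $\calX$ can be viewed as a map $F\colon \calX'\to\calX''$ of $(k-1)$-cubes in a chosen direction (say the $k^{\text{th}}$), together with the basic relationship between the map $a(\calX)$ and the maps $a(\calX')$, $a(\calX'')$. The base case $k=1$ is essentially the definition: a $1$-cube $X_\emptyset\to X_1$ is $c$-cartesian precisely when that map is $c$-connected, and its ``$0$-cube of homotopy fibers'' is $\hofiber(X_\emptyset\to X_1)$, which is $c$-cartesian (i.e.\ $(c-1)$-connected as a space) by \refD{k-connectedSpace}. So the content is the inductive step.

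First I would set up notation: write $\calX$ as the map $F\colon\calX'\to\calX''$ of $(k-1)$-cubes, where $\calX'$ is the subcube on subsets not containing $k$ and $\calX''$ the subcube on subsets containing $k$ (reindexed). Let $\calF$ denote the $(k-1)$-cube of homotopy fibers of $F$, taken over compatible basepoints. The key homotopy-theoretic input is the comparison of homotopy limits: there is a homotopy fiber sequence relating $\holim\calX$, $\holim\calX'$, and $\holim\calX''$, and compatibly a map from the ``$1$-cube'' $X_\emptyset = X'_\emptyset \to X''_\emptyset$ into it. Concretely, $a(\calX)$ fits into a map of homotopy fiber sequences whose other two vertical maps are $a(\calX')$ and $a(\calX'')$, and the homotopy fiber (in the $k$-direction) of the $k$-cube $\calX$ is exactly the iterated fiber computed by first taking the fiber of $F$ to get $\calF$ and then taking $a(\calF)$. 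Granting this, one reads off: $\calX$ is $c$-cartesian $\iff$ $a(\calX)$ is $c$-connected $\iff$ (by the long exact sequence of the fiber sequence, or by \refP{FibrationHofiber} applied to the fibration $\holim\calX\to\holim\calX''$ with fiber $\holim\calX'$, compatibly with the analogous statement for the edge $X_\emptyset$) the map on homotopy fibers $\hofiber(X_\emptyset\to X_\emptyset'') \to \hofiber(\holim\calX'\to \holim\calX'')$ is $c$-connected. But that map is precisely $a(\calF)$ up to homotopy, by the identification of the homotopy limit of the punctured $(k-1)$-cube of fibers with the fiber of the punctured-cube homotopy limits. Hence $\calX$ is $c$-cartesian $\iff$ $\calF$ is $c$-cartesian, which is the claim.

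Alternatively — and this is perhaps the cleaner route to write up — I would reduce directly to the $0$-cube by iterating \emph{in the same way} on both sides of the desired equivalence. Taking homotopy fibers in some direction turns a $c$-cartesian $k$-cube into a $c$-cartesian $(k-1)$-cube and vice versa (this is the single inductive step above); iterating $k$ times, $\calX$ is $c$-cartesian iff its iterated fiber, a $0$-cube, is $c$-cartesian, i.e.\ $(c-1)$-connected as a space. Since the excerpt already records that the iterated fiber is independent, up to homotopy equivalence, of the directions chosen, ``the $(k-1)$-cube of its homotopy fibers'' has the same iterated fiber as $\calX$ itself, so the two sides of the biconditional have literally the same iterated fiber and hence are equivalently $c$-cartesian. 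This packages the statement as an immediate consequence of the inductive step plus the direction-independence of iterated fibers.

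The main obstacle is the inductive step itself, i.e.\ making precise the claim that $a(\calX)$ sits in a fiber sequence over $a(\calX'')$ with fiber $a(\calX')$, compatibly with the edge map $X_\emptyset\to X_\emptyset''$ — equivalently, that the homotopy limit of a punctured cube commutes appropriately with homotopy fibers in one direction. This is where one genuinely uses that $\holim$ of a punctured cube is built from mapping spaces out of simplices and that these mapping-space constructions preserve fibrations; the relevant statement is essentially \cite[Theorem 8.6.1]{MV:Cubes} on the fibrations \eqref{E:SubdiagramMap}, applied to the inclusion of $\calX''$-type subdiagrams. Once that fibration and its compatibility with the edge $X_\emptyset\to X_\emptyset''$ are in hand, the connectivity bookkeeping is routine: it is just the five-lemma-style argument in homotopy, i.e.\ \refP{CompositionConnectivity} applied across a map of fiber sequences. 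I would state the fibration fact as a cited lemma and spend the bulk of the write-up on the clean identification of $a(\calF)$ with the induced map on fibers, since that is the step a reader is most likely to want spelled out.
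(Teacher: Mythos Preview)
The paper does not actually prove \refP{FiberCartesian}; it only cites \cite[Proposition 5.4.12]{MV:Cubes} as a reference, so there is no in-paper argument to compare against. Your outline is essentially the standard proof one finds in that reference: view the $k$-cube as a map of $(k-1)$-cubes, identify $\holim_{\calP_0(\underline k)}\calX$ as the homotopy pullback $\holim\calX' \times^h_{\holim\calX''} X''_\emptyset$, and deduce that $\tfiber\calX\simeq\tfiber\calF$ by computing the iterated fiber of the resulting square two ways.

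Two small points. First, your phrase ``a homotopy fiber sequence relating $\holim\calX$, $\holim\calX'$, and $\holim\calX''$'' is imprecise as written; the correct statement is the pullback identification just mentioned, from which one gets $\hofiber(\holim\calX\to X''_\emptyset)\simeq\hofiber(\holim\calX'\to\holim\calX'')\simeq\holim\calF$. That last equivalence (holim commutes with hofiber) is the real content and deserves a sentence. Second, your alternative route via \refP{IteratedFiber} is clean but note that in the paper's ordering that proposition comes \emph{after} \refP{FiberCartesian} and is likewise only cited; in \cite{MV:Cubes} the two are proved in tandem, so invoking one to prove the other is fine externally but would look circular if you were writing both proofs from scratch here. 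Your first approach avoids this and is the one to write up.
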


It turns out that for a cube to be strongly (co)cartesian, it suffices for each of its square faces to be (co)cartesian (see remarks following Definitions 5.4.18 and 5.8.18 in \cite{MV:Cubes}).  Here is an example that will reappear as a cocartesian square face in the proof of \refP{ConfFibersCubeCocartesian}.

\begin{example}\label{Ex:Intersection-Union}
Suppose $X$ is a space and $U, V$ are open subsets.
 Then the square
$$
\xymatrix{
U\cap V \ar[r]\ar[d] & U \ar[d]\\
V \ar[r] & U\cup V
}
$$
where all the maps are inclusions, is cocartesian.  We leave it to the reader to work out the details (or see \cite[Example 3.7.5]{MV:Cubes}).  We will refer to this square as the ``intersection-union'' square.
\end{example}

For $a(\calX)$ to be $c$-connected means that its homotopy fiber is $(c-1)$-connected (\refD{k-connectedSpace}).  This homotopy fiber has a special name. 

\begin{defin}\label{D:TotalFiber}
The \emph{total fiber of a cube $\calX$}, $\tfiber(\calX)$, is 
$$
\tfiber(\calX)=\hofiber(a(\calX)).
$$
\end{defin}

As alluded to before, there is a relationship between the iterated fiber and the total fiber.  For the proof of the following, see for example \cite[Proposition 5.5.4]{MV:Cubes}.

\begin{prop}\label{P:IteratedFiber}
Total fiber and iterated fiber are homotopy equivalent.
\end{prop}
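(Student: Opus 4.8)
The statement to prove is Proposition~\ref{P:IteratedFiber}: \emph{Total fiber and iterated fiber are homotopy equivalent.}

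\textbf{The plan.} The plan is to prove this by induction on the dimension $k$ of the cube $\calX$, using \refP{FiberCartesian} as the inductive engine. The key conceptual point is that both constructions---$\tfiber(\calX)=\hofiber(a(\calX))$ and the iterated homotopy fiber---are built by repeatedly applying $\hofiber$, but organized differently: the total fiber takes a single homotopy fiber of the map $a(\calX)\colon X_\emptyset\to\holim_{S\in\calP_0(\underline k)}X_S$, while the iterated fiber peels off one direction at a time. I would reconcile these by showing that passing to homotopy fibers in one direction is compatible with forming $\holim$ of the punctured cube, so that one application of $\hofiber$ reduces a $k$-cube to a $(k-1)$-cube with both invariants preserved.

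\textbf{Key steps.} First, for the base case $k=1$, a $1$-cube is just a map $f\colon X_\emptyset\to X_1$; here $\calP_0(\underline 1)=\{\{1\}\}$ so $\holim$ is (weakly equivalent to) $X_1$ and $a(\calX)$ is $f$ itself, whence $\tfiber(\calX)=\hofiber(f)$, which is also exactly the iterated fiber by definition. (One may also take $k=0$ as a trivial base case, where both are $X_\emptyset$.) Second, for the inductive step, regard the $k$-cube $\calX$ as a map $F\colon \calX'\to\calX''$ of $(k-1)$-cubes in the $k$-th direction, where $X'_S=X_S$ and $X''_S=X_{S\cup\{k\}}$ for $S\subseteq\underline{k-1}$. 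Form the $(k-1)$-cube $\mathcal{Z}$ of homotopy fibers $Z_S=\hofiber(X_S\to X_{S\cup\{k\}})$ over compatible basepoints. By definition of the iterated fiber, the iterated fiber of $\calX$ equals the iterated fiber of $\mathcal{Z}$, and by the inductive hypothesis this is $\tfiber(\mathcal{Z})=\hofiber(a(\mathcal{Z}))$. Third, I would identify $\hofiber(a(\mathcal{Z}))$ with $\tfiber(\calX)$: this is precisely the content of \refP{FiberCartesian}, which says a $k$-cube $\calX$ is $c$-cartesian iff its $(k-1)$-cube of homotopy fibers is $c$-cartesian---but more is true, and what I actually need is that the homotopy fiber of $a(\calX)$ is homotopy equivalent to the homotopy fiber of $a(\mathcal{Z})$, since $\tfiber$ is literally this homotopy fiber. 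The cleanest route is to observe that $\holim$ commutes with $\hofiber$ in a fixed direction up to natural weak equivalence (both are built from mapping spaces out of simplices and path spaces, and limits commute with limits), so that $a(\mathcal{Z})\colon Z_\emptyset\to\holim\mathcal{Z}$ is, up to weak equivalence, the induced map on homotopy fibers of the square
$$
\xymatrix{
X_\emptyset \ar[r]^-{a(\calX)}\ar[d] & \holim_{S\in\calP_0(\underline k)}X_S \ar[d]\\
X_{\{k\}} \ar[r] & \holim_{S\in\calP_0(\underline k),\,k\in S}X_S
}
$$
and a short diagram chase (or appeal to the fact that $\hofiber$ of a map of fibrations agrees with the fiber, using that the vertical maps, or rather the relevant restriction map, is a fibration) shows $\hofiber(a(\mathcal{Z}))\simeq\hofiber(a(\calX))=\tfiber(\calX)$. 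Chaining the equivalences completes the induction.

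\textbf{Main obstacle.} The main obstacle is the bookkeeping in the third step: making precise that forming homotopy fibers in the $k$-th direction is compatible with the homotopy limit over $\calP_0(\underline k)$, i.e.\ that $a(\mathcal{Z})$ really is (up to natural weak equivalence) the map on homotopy fibers induced by $a(\calX)$. This requires either invoking the general principle that homotopy limits commute (a standard but slightly technical fact about the explicit simplicial models for $\holim$ of punctured cubes from \cite[Section 5.3]{MV:Cubes}), or else unwinding the definition of $\holim$ over $\calP_0(\underline k)$ by splitting the indexing poset into the subsets containing $k$ and those not containing $k$, recognizing the result as a homotopy pullback, and then using that homotopy pullback commutes with homotopy fiber. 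Everything else---the base case and the assembly of the induction---is routine. As this is a well-known result, I would likely present the argument compactly and refer the reader to \cite[Proposition 5.5.4]{MV:Cubes} for the details of the commutation of homotopy limits.
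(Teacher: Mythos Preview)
The paper does not give its own proof of this proposition; it simply refers the reader to \cite[Proposition 5.5.4]{MV:Cubes}. Your sketch is correct and is essentially the standard inductive argument that appears there (and you even cite the same reference for the technical details), so there is nothing to compare.
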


\begin{rem}
There are correponding notions of the total and iterated \emph{co}fiber, with dual results holding, but we will not need them here.  For more details, see \cite[Section 5.9]{MV:Cubes}.
\end{rem}


We can finally state the main results we will use in our proofs. The following is the Generalized Blakers-Massey Theorem due to Ellis and Steiner \cite{ES}, although it first appears in this form as \cite[Theorem 2.3]{CalcII}. It will be used in the proof of \refP{DerivativeConnectivity}.

\begin{thm}[Generalized Blakers-Massey Theorem]\label{T:B-M}
Suppose $k\geq 1$ and let
\begin{align*}
\calX\colon \calP(\underline k) & \longrightarrow \Top \\
S &\longmapsto X_S
\end{align*}
be a strongly cocartesian $k$-cubical diagram. Suppose the maps 
$$
X_\emptyset \longrightarrow X_i \ \ \ \text{are}\ \ \ \text{$k_i$-connected,}\ \ \ 1\leq i\leq k.
$$
Then $\calX$ is
$$
\big(1-k+\sum_{i\in \underline k} k_i\big)-\text{cartesian}.
$$
\end{thm}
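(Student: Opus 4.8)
The plan is to induct on $k$. The case $k=1$ holds trivially: a $1$-cube is vacuously strongly cocartesian, its map $a(\calX)$ is just $X_\emptyset\to X_1$, and the asserted bound $1-1+k_1=k_1$ is precisely the hypothesis. The essential input is the case $k=2$, which is the classical Blakers--Massey theorem: a cocartesian square in which $X_\emptyset\to X_1$ is $k_1$-connected and $X_\emptyset\to X_2$ is $k_2$-connected is $(k_1+k_2-1)$-cartesian. I would take this as known; one standard proof replaces the square, up to homotopy, by a strict pushout in which $X_\emptyset\hookrightarrow X_1$ and $X_\emptyset\hookrightarrow X_2$ are relative CW inclusions with cells in dimensions exceeding $k_1$ and $k_2$ respectively, so that $X_{12}=X_1\cup_{X_\emptyset}X_2$, and then shows by a transversality/general-position argument that a disk of dimension at most $k_1+k_2-2$ representing a class in the total fiber can be pushed off one of the two cell complexes.

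For $k\geq 3$ I would run the following inductive step. Fix the $k$-th coordinate direction and regard $\calX$ as a map of $(k-1)$-cubes from the ``front'' face $\calX'$, $S\mapsto X_S$ ($S\subseteq\{1,\dots,k-1\}$), to the ``back'' face $\calX''$, $S\mapsto X_{S\cup\{k\}}$. Both $\calX'$ and $\calX''$ are faces of $\calX$ of dimension $k-1\geq 2$, hence strongly cocartesian; and since every square face of $\calX$ is cocartesian, a short induction on $|S|$ shows that $X_{S\cup\{k\}}$ is the homotopy pushout of $X_S\leftarrow X_\emptyset\to X_k$. Hence each map $X_S\to X_{S\cup\{k\}}$ is a cobase change of the $k_k$-connected map $X_\emptyset\to X_k$ and is therefore $k_k$-connected (a homotopy pushout of an $n$-connected map is $n$-connected), and likewise the initial-edge maps of $\calX'$ and $\calX''$ are (cobase changes of) the maps $X_\emptyset\to X_i$, $i<k$, hence $k_i$-connected. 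The inductive hypothesis then bounds how cartesian $\calX'$ and $\calX''$ are, hence the connectivity of their total fibers; and by \refP{FiberCartesian}, $\calX$ is $c$-cartesian if and only if the $(k-1)$-cube $\mathcal{G}$ with $\mathcal{G}(S)=\hofiber\big(X_S\to X_{S\cup\{k\}}\big)$ is $c$-cartesian. Applying the case $k=2$ once more --- to the cocartesian square faces with vertices $X_S,\ X_{S\cup\{i\}},\ X_{S\cup\{k\}},\ X_{S\cup\{i,k\}}$ --- one gets that every edge $\mathcal{G}(S)\to\mathcal{G}(S\cup\{i\})$ is $(k_i+k_k-1)$-connected and every vertex $\mathcal{G}(S)$ is $(k_k-1)$-connected. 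Combining these estimates through the iterated-fiber description of $\tfiber(\mathcal{G})$ (\refP{IteratedFiber}) and \refP{CompositionConnectivity} should produce the bound $1-k+\sum_{i\in\underline k}k_i$.

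\textbf{The step I expect to be hardest is this final assembly.} The fiber cube $\mathcal{G}$ is \emph{not} strongly cocartesian --- passing to homotopy fibers destroys cocartesian-ness --- so the statement being proved cannot be applied to $\mathcal{G}$ directly, and the bookkeeping is delicate: estimating $\tfiber(\mathcal{G})$ from the connectivities of its vertices alone loses the summand $\sum_{i<k}k_i$, whereas writing $\tfiber(\calX)=\hofiber\big(\tfiber(\calX')\to\tfiber(\calX'')\big)$ and using only the connectivities of source and target loses the summand $k_k$; the sharp bound needs both kinds of information simultaneously. The clean way to organize this is to carry along, in the same induction, a more flexible Blakers--Massey statement whose hypotheses concern the lower-dimensional faces of a cube being suitably $c$-cocartesian rather than strongly cocartesian (this is \refT{GeneralB-M}), since the auxiliary cubes such as $\mathcal{G}$ satisfy exactly such hypotheses by virtue of the classical excision theorem. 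Alternatively, one can bypass the topology and derive the estimate algebraically from the theory of crossed $n$-cubes of groups, which is the route taken by Ellis and Steiner.
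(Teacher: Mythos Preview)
The paper does not prove this theorem. It is stated as background, with the remark that it is ``due to Ellis and Steiner \cite{ES}, although it first appears in this form as \cite[Theorem 2.3]{CalcII}'', and the authors explicitly say ``We will not need anything about these results beyond their statements.'' So there is no paper's-own-proof to compare against.

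That said, your sketch is a reasonable outline of how the literature proofs go, and you have correctly put your finger on the genuine obstacle: the $(k-1)$-cube $\mathcal{G}$ of homotopy fibers is \emph{not} strongly cocartesian, so the naive induction does not close. Your two proposed resolutions are exactly the two standard ones. Goodwillie's proof in \cite{CalcII} does precisely what you suggest first: he proves the more general statement (\refT{GeneralB-M} here) by induction on $k$, because the hypotheses of \emph{that} theorem --- bounds on how cocartesian each face $\partial^S\calX$ is, monotone in $S$ --- are stable under passing to cubes of fibers in a way that ``strongly cocartesian'' is not. The strongly cocartesian version then drops out as the special case where all $k_S$ with $|S|\geq 2$ are $+\infty$. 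The Ellis--Steiner route via crossed $n$-cubes of groups is the other established approach.

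One small correction to your sketch: you write that ``a short induction on $|S|$ shows that $X_{S\cup\{k\}}$ is the homotopy pushout of $X_S\leftarrow X_\emptyset\to X_k$''. This is true for a strongly cocartesian cube, but it requires the observation that iterated pushouts along cocartesian squares assemble correctly; it is not quite immediate from the definition that each square face is cocartesian. It is, however, a standard fact (see e.g.\ the discussion around Definitions~5.4.18 and~5.8.18 in \cite{MV:Cubes}), so this is not a gap, just a place where a reader might want a reference.
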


We also have a generalization of the previous result due to Goodwillie  \cite[Theorem 2.5]{CalcII} (also see \cite[Theorem 6.2.3]{MV:Cubes}).  This result will be used in the proof of  \refT{TowerConvergenceToEmbCartPQ}.

\begin{thm}
\label{T:GeneralB-M}
Suppose $k\geq 1$ and let
\begin{align*}
\calX\colon \calP(\underline k) & \longrightarrow \Top \\
S &\longmapsto X_S
\end{align*}
be a $k$-cubical diagram. Suppose that
\begin{itemize}
\item for each nonempty subset $S$ of $\underline k$, the face $\del^S\calX$ is $k_S$-cocartesian; and
\item for $T\subset S$, $k_T\leq k_S$.
\end{itemize}
Then $\calX$ is $c$-cartesian, where $c$ is the minimum of 
$$
1-k+\sum k_{T_\alpha}
$$ 
taken over all partitions $\{ T_\alpha\}$ of $\underline k$.
\end{thm}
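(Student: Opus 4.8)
The plan is to bootstrap from the strongly cocartesian case, \refT{B-M}, which we are free to use, by comparing the given cube $\calX$ with a strongly cocartesian cube assembled from its ``legs'' $X_\emptyset\to X_{\{i\}}$. First I would introduce a strongly cocartesian $k$-cube $\calY$ together with a natural transformation $\calY\to\calX$: take $\calY$ to be the (essentially unique) strongly cocartesian cube whose initial vertex and legs agree with those of $\calX$, built by iterated homotopy pushout as $Y_S=\hocolim_{\calP_1(S)}\calY$, and let $\calY\to\calX$ be the canonical map supplied by the universal property of the homotopy pushout, which is an equivalence at every vertex $S$ with $|S|\leq 1$ and, at larger $S$, records the failure of $\calX$ to be built from its legs by iterated pushout along the faces below $S$. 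By hypothesis the $1$-dimensional face $\del^{\{i\}}\calX$ is $k_{\{i\}}$-cocartesian, i.e.\ the leg $X_\emptyset\to X_{\{i\}}$ is $k_{\{i\}}$-connected, so \refT{B-M} applies to $\calY$ and shows it is $\big(1-k+\sum_{i\in\underline k}k_{\{i\}}\big)$-cartesian; this already produces the term of the desired minimum coming from the partition of $\underline k$ into singletons.

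Next I would compare total fibers. The map $\calY\to\calX$ induces $\tfiber\calY\to\tfiber\calX$, and, using \refP{IteratedFiber} (iterated fibers may be taken in any order), the homotopy fiber of this map is equivalent to the total fiber of the $k$-cube $S\mapsto\hofiber(Y_S\to X_S)$, which is weakly contractible at every vertex $S$ with $|S|\leq 1$. Taking homotopy fibers of this cube in the first few directions therefore produces iterated loop spaces, so that $\tfiber$ of it, and hence the discrepancy between $\tfiber\calY$ and $\tfiber\calX$, is controlled by the connectivities of the comparison maps $Y_S\to X_S$, each diminished by a shift depending on $|S|$. By \refP{CompositionConnectivity} the whole problem thus reduces to bounding the connectivity of each $Y_S\to X_S$ from below in terms of the numbers $k_T$ for $T\subseteq S$, and then assembling a minimum.

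That last reduction is the step I expect to be the main obstacle. Bounding $Y_S\to X_S$ is itself a Blakers--Massey-type statement, which I would prove by a secondary induction on $|S|$: factoring $Y_S\to X_S$ through the homotopy colimits $\hocolim_{\calP_1(S)}$ of the partially corrected subdiagrams and invoking the cocartesianness of the faces $\del^T\calX$ together with the inductive estimates, one finds that $Y_S\to X_S$ is $c_S$-connected where $c_S$ is a minimum, over partitions of $S$, of sums of the corresponding $k_T$'s (with leg connectivities used for singleton blocks). It is precisely here that the partitions in the statement arise: each way in which $\calX$ fails to be an iterated pushout of its legs corresponds to a partition $\{T_\alpha\}$ and contributes the term $1-k+\sum_\alpha k_{T_\alpha}$. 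The only genuine difficulty in the whole argument is keeping these connectivity bookkeepings — and the off-by-ones coming from the loop-space shifts — straight, and then verifying that the minima assembled from $\tfiber\calY$ and from the maps $\hofiber(Y_S\to X_S)$ combine into exactly the minimum over all partitions of $\underline k$ of $1-k+\sum_\alpha k_{T_\alpha}$. Since we use only the statement, this is in essence the argument of Goodwillie \cite[Theorem 2.5]{CalcII} (see also \cite[Theorem 6.2.3]{MV:Cubes}), which we do not reproduce here.
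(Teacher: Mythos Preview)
The paper does not prove this theorem. It is stated as a result due to Goodwillie \cite[Theorem~2.5]{CalcII} (with the alternative reference \cite[Theorem~6.2.3]{MV:Cubes}) and is used as a black box in the proof of \refT{TowerConvergenceToEmbCartPQ}. There is therefore no proof in the paper to compare your proposal against.

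Your sketch is in the spirit of Goodwillie's original argument---building a strongly cocartesian comparison cube $\calY$ from the legs of $\calX$, applying \refT{B-M} to $\calY$, and then controlling the discrepancy $\tfiber\calY\to\tfiber\calX$ by an induction that produces the partition terms---and you yourself defer to the same references at the end. As a sketch it is reasonable, though it is not a proof: the inductive step bounding the connectivity of $Y_S\to X_S$ by a minimum over partitions of $S$ is where all the work lies, and you have (accurately) flagged this as the main obstacle without carrying it out. Since the paper explicitly declines to prove this result and only needs its statement, your proposal goes beyond what the paper does; if you want to include an actual proof, you would need to fill in that inductive bookkeeping rather than gesture at it.
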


\begin{example} In the 3-cube 
\begin{equation}
\xymatrix@=10pt{
   X_\emptyset \ar[dd]\ar[dr]
    \ar[rr]
   &           &   X_1 \ar'[d][dd]           \ar[dr]  &                  \\
          &  X_2 \ar[rr] \ar[dd]  &             & X_{12}   \ar[dd] \\
   X_3 \ar'[r][rr] \ar[dr] &        &   X_{13}
\ar[dr] &                   \\
          &   X_{23} \ar[rr]      &                    &   X_{123}
}
\end{equation}
satisfying the hypotheses of the theorem (i.e.~each face is at least as cocartesian as all the faces it contains), one looks at 
\begin{enumerate}
\item How cocartesian the entire cube is; this is $k_{123}$.

\item The sum of the connectivity of the map $X_\emptyset\to X_1$ and how cocartesian the square 
$$
\xymatrix{
X_\emptyset \ar[r]\ar[d] &  X_2\ar[d]\\
X_3 \ar[r] & X_{23}
}
$$
is; this is $k_1+k_{23}$.
\item Two more numbers as in the previous item, except the indices are permuted; these are $k_2+k_{13}$ and $k_3+k_{12}$.
\item The sum of the connectivities of the maps $X_\emptyset\to X_i$; this is $k_1+k_2+k_3$.
\end{enumerate}
Then the cube is
$$
(1-3+\min\{k_{123}, k_1+k_{23}, k_2+k_{13},  k_3+k_{12}, k_1+k_2+k_3\})\text{-cocartesian}.
$$ 
\end{example}

\begin{rem}
\refT{GeneralB-M} is indeed a generalization of \refT{B-M} since, if $\calX$ is strongly cocartesian, then all its faces are cocartesian, i.e.~infinitely cocartesian.  The minimum is thus achieved with the sum $\sum k_i$ since all the other numbers are infinity. 
\end{rem}


\section{Taylor tower for embeddings}\label{S:EmbeddingsTower}


The story of Taylor towers in manifold calculus of functors is deep and rich.  It applies to various functors from the category of open subsets of a manifold to the category of spaces or spectra that satisfy certain technical conditions.  Our goal here, however, is to elucidate the particular case of the space/functor of embeddings without bringing in all the machinery.  In addition, we are only concerned with embeddings of $M$ and not the functoriality with respect to its open subsets, so what we are doing here can be thought of as just analyzing the Taylor tower for one particular value of the embedding functor, namely on the open set that is $M$ itself.  One upshot of narrowing our focus so much is that it liberates us from requiring a lot of background from category theory or homotopy theory. 

As a result, this section will neither require nor provide much background on manifold calculus of functors.  The reader who wishes to see the bigger picture or who wants to be able to place the results in this paper into a larger context of manifold calculus should consult the expository writing \cite{M:MfldCalc} and \cite[Section 10.2]{MV:Cubes} in addition to papers \cite{GW:EI2, W:EI1} that set up the theory.  


\begin{defin}\label{D:Embedding}
Let $M$ be a smooth manifold without boundary of dimension $m$.  
\begin{itemize}
\item An \emph{embedding} of $M$ in $\R^n$ is a smooth map that is a homeomorphism onto its image and whose derivative is injective.
\item The \emph{space of embeddings} $\Emb(M,\R^n)$ is the subspace of the space of smooth maps from $M$ to $\R^n$ consisting of smooth embeddings of $M$ in $\R^n$ (the space of smooth maps is topologized using the Whitney $\calC^\infty$ topology).
\end{itemize}
\end{defin}

\begin{rem}
We will also mention the space of \emph{immersions} $\Imm(M,\R^n)$ in passing (\refEx{T_0andT_1}), i.e.~the space of smooth maps from $M$ to $\R^n$ with injective derivative.  This space is topologized the same way as the space of embeddings, and the latter can then be regarded as a subspace of the space of immersions.
\end{rem} 

We can now define the stages of the Taylor tower for $\Emb(M,\R^n)$.  We will first give the original definition, \refD{TowerStages} below, in terms of a homotopy limit of a certain infinite diagram of spaces of embeddings.  Appreciating this definition unfortunately requires dexterity with homotopy limits of general diagrams, so we will also offer an alternative description in terms of iterated punctured cubical diagrams.  The latter has the advantage that it is easier to digest, but it has the defect that it is not functorial in a certain way.  However, in line with the remarks above, this is not an issue for the purposes of this paper since we are only interested in embeddings of $M$ and not of its open subsets.  

Let $\calO(M)$ be the poset of open subsets of $M$ (with inclusions) and let $\mathcal{O}_k(M)$ be the subposet of  $\mathcal{O}(M)$ consisting of open subsets of $M$ diffeomorphic to up to $k$ disjoint balls.

\begin{defin}\label{D:TowerStages}
For $k\geq 0$, define the \emph{$k^{th}$ stage of the Taylor tower for $\Emb(M,\R^n)$}, denoted by $T_k\Emb(M,\R^n)$, to be the homotopy limit
\begin{equation}\label{E:T_kHolim}
T_k\Emb(M,\R^n)=\underset{V\in \mathcal{O}_k(M)}{\holim} \Emb(V,\R^n)
\end{equation}
\end{defin}


The stage $T_k\Emb(M,\R^n)$ mimics the $k^{th}$ Taylor polynomial of an ordinary analytic function.  It is polynomial of degree $k$ in the sense that its higher derivatives (suitably defined) vanish.  This stage should be thought of as trying to approximate embeddings of $M$ from embeddings of pieces of $M$.  See \cite[Section 4]{M:MfldCalc} for more details and helpful intuition.  


\begin{example}\label{Ex:T_0andT_1}
It is easy to see from the definition that
$$
T_0\Emb(M,\R^n)\simeq\ast,
$$
a one-point space.
It also turns out that 
$$T_1\Emb(M,\R^n)\simeq\Imm(M,\R^n).
$$  For more detail, see \cite[Example 2.3]{W:EI1} or \cite[Examples 4.8 and 4.14]{M:MfldCalc}.  This equivalence can be thought of as saying that the  ``linearization'' of the space of embeddings is the space of immersions.  
\end{example}

Now suppose $M$ is closed (so we can utilize its handle structure) and $A_i$, $1\leq i\leq k+1$, are disjoint closed subsets $A_i$ of $M$ such that removing any number of them from $M$ reduces its handle dimension (\emph{handle dimension} of $M$ is the highest index of a handle necessary to build $M$).  This can always be done (using the handle structure of $M$) and we thus obtain a punctured cubical diagram
\begin{align}
\mathcal H_k\colon \calP_0(\underline{k+1}) & \longrightarrow \Top \label{E:HolesDiagram}\\
S &\longmapsto \Emb(M-\bigcup_{i\in S}A_i, \R^n) \notag
\end{align}
where an inclusion $T\hookrightarrow S$ goes to the restriction
$$
\Emb(M-\bigcup_{j\in T}A_j, \R^n)\longrightarrow \Emb(M-\bigcup_{i\in S}A_i, \R^n).
$$
By construction, each of the manifolds being embedded has handle index smaller than that of $M$.  Applying $T_k(-)$ to this punctured cube produces a new punctured cube 
\begin{align}
\calT_k\colon \calP_0(\underline{k+1}) & \longrightarrow \Top \label{E:HolesCube}\\
S &\longmapsto T_k\Emb(M-\bigcup_{i\in S}A_i, \R^n) \notag
\end{align}
where an inclusion $T\hookrightarrow S$ now goes to the map
$$
T_k\Emb(M-\bigcup_{j\in T}A_j, \R^n)\longrightarrow T_k\Emb(M-\bigcup_{i\in S}A_i, \R^n).
$$
 ($T_k(-)$ is really a functor from spaces to spaces, so it makes sense to apply it to a cube of embedding spaces and obtain another cube of spaces.)
For each $S\subset \underline{k+1}$, there is also a canonical restriction map
$$
\Emb(M,\R^n)\longrightarrow \Emb(M-\bigcup_{i\in S}A_i, \R^n)
$$
and an induced map  
$$
T_k\Emb(M,\R^n)\longrightarrow T_k\Emb(M-\bigcup_{i\in S}A_i, \R^n),
$$
which commutes with the maps in $\calT_k$.  This means, by \eqref{E:SpaceToDiagramMapHolim}, that there is a map
\begin{equation}\label{E:T_kSame}
T_k\Emb(M,\R^n)\longrightarrow \underset{S\in \calP_0(\underline{k+1})}{\holim} T_k\Emb(M-\bigcup_{i\in S}A_i, \R^n).
\end{equation}
It turns out that, because of the properties of $T_k$ and polynomial functors, this map is an equivalence.  For details, see \cite[Example 10.2.18]{MV:Cubes}.

The process can now be repeated so that for each $S\subset \underline{k+1}$, $k+1$ disjoint closed subsets of 
$M-\bigcup_{i\in S}A_i$ are chosen so that removing any number of those reduces the handle dimension of $M-\bigcup_{i\in S}A_i$.  Thus each  $T_k\Emb(M-\bigcup_{i\in S}A_i, \R^n)$ in $\calT_k$ can now be written as a homotopy limit of a punctured cubical diagram of embeddings of manifolds whose handle index is smaller than that of $M-\bigcup_{i\in S}A_i$.

Continuing to 	``punch holes'' in $M$ like this, at each stage we have new smooth submanifolds $L$ of $M$ of lower handle dimension than the submanifolds in the previous stage, and in each of those submanifolds we punch new $k+1$ holes that reduce its handle dimension.  We can this way reduce to only having to consider embeddings of index zero submanifolds of $M$, namely finite unions of open balls.  At that point, we let the holes be the balls themselves.  (At each stage of the process, the result is the interior of a smooth compact manifold of codimension zero, so it will have finitely many handles and the process will thus terminate in finitely many steps.) 

Furthermore, all we need is to consider up to $k$ balls since what a polynomial functor such as $T_k$ does on more than $k$ balls is determined by what it does on up to $k$ balls (for details, see for example the proof of Theorem 10.2.14 in \cite{MV:Cubes}).
Finally, again because of properties of polynomial functors, there is an equivalence \cite[Theorem 6.1]{W:EI1}
\begin{equation}\label{E:SameOnBalls}
\Emb(\leq k \text{ balls}, \R^n)\stackrel{\simeq}{\longrightarrow} T_k\Emb(\leq k \text{ balls}, \R^n).
\end{equation}

\begin{example}
If $M=S^1$, then $M-\bigcup_{i\in S}A_i$ is the union of $|S|$ open 1-balls or open arcs.  Since these are already handles of index zero, we have that $T_k\Emb(S^1,\R^n)$ is the homotopy limit of the punctured cube of embeddings of arcs in $\R^n$.
\end{example}

\begin{example}
Consider $T_2\Emb(S^2,\R^n)$.  We can remove subsets $A_1$, $A_2$, and $A_3$ so that the complement of one of them is the disk $D^2$, the complement of two of them is the annulus $A$, and the complement of all three of them is the pair of pants $P$.  Then $T_2\Emb(S^2,\R^n)$ is the homotopy limit of seven spaces $T_2\Emb(L,\R^n)$ where $L$ is $D^2$, $A$, or $P$.

We can punch further holes in $A$ to that the complement of one, two, or three holes is one, two, or three disks.  Thus $T_2\Emb(A,\R^n)$ is the homotopy limit of seven spaces $\Emb(L,\R^n)$ where $L$ is now one, two, or three disks.  Similarly we can resolve $P$ so that $T_2\Emb(P,\R^n)$ can ultimately be expressed in terms of embeddings of one, two, or three disks.  

The pattern continues in a straightforward way for $T_k\Emb(S^2,\R^n)$.
\end{example}

To construct $T_{k-1}\Emb(M,\R^n)$, choices of where to punch holes are made, but these choices can be made consistently so that the next stage is constructed by using the same holes in addition to a new one at each stage of the iteration.  Thus the punctured cubical diagrams like $\calT_{k-1}$ whose homotopy limits iteratively define $T_{k-1}\Emb(M,\R^n)$ are subdiagrams of diagrams like $\calT_{k}$, those that define $T_{k}\Emb(M,\R^n)$.  It then follows (see \eqref{E:SubdiagramMap}) that there is an induced fibration, for $k\geq 1$,
\begin{equation}\label{E:T_k->T_{k-1}}
T_{k}\Emb(M,\R^n)\longrightarrow T_{k-1}\Emb(M,\R^n)
\end{equation}
(Another way to think about this is in terms of \refD{TowerStages}; the above map is induced by the inclusion of posets $\mathcal{O}_{k-1}(M)\hookrightarrow \mathcal{O}_k(M)$.)  The first big question we will try to answer in \refS{TowerConverge} is that of the connectivity of this map.

Another observation is that there is a canonical map
\begin{equation}\label{E:Emb->T_k}
\Emb(M,\R^n)\longrightarrow T_{k}\Emb(M,\R^n)
\end{equation}
induced by the restriction maps
$$
\Emb(M,\R^n)\longrightarrow \Emb(L, \R^n)
$$
where $L$ is any of the submanifolds of $M$ obtained by punching holes in $M$ in the manner described above.  (Again, another way to think about this is in terms of \refD{TowerStages}; the above map is induced by the restriction of an embedding of $M$ to an embedding of $V\in \mathcal{O}_k(M)$.)  The second big question we will try to answer in \refS{TowerConvergeToEmb} is that of the connectivity of this map.

%
%
%

Combining the maps in \eqref{E:T_k->T_{k-1}} and \eqref{E:Emb->T_k}, we get 
the \emph{Taylor tower for $\Emb(M,\R^n)$}:
\begin{equation}\label{E:TowerWithEmb}
\xymatrix{
 &  &  T_{\infty}\Emb(M,\R^n)\ar[d] \\
       &  &   \vdots \ar[d]  \\
 \Emb(M,\R^n)\ar[uurr]\ar[rr]\ar[drr]\ar[dddrr]\ar@/_3pc/[ddddrr]   &  &    T_k\Emb(M,\R^n)\ar[d] \\
       &  &    T_{k-1}\Emb(M,\R^n) \ar[d] \\
            &  &   \vdots \ar[d]  \\
            &  &    T_{1}\Emb(M,\R^n)\simeq \Imm(M,\R^n)\ar[d] \\
                &  &    T_{0}\Emb(M,\R^n)\simeq \ast
}
\end{equation}
As noted above, the maps in the tower are fibrations, and $T_\infty \Emb(M,\R^n)$ is the inverse (homotopy) limit of the tower:
$$
T_\infty \Emb(M,\R^n)=\underset{k}{\holim}\, T_k \Emb(M,\R^n)=\lim_k T_k \Emb(M,\R^n)
$$

Just as with the ordinary Taylor series, one can ask two natural questions about the Taylor tower \eqref{E:TowerWithEmb}:
\begin{enumerate}
\item Does it converge?
\item If it converges, what does it converge to?
\end{enumerate}

These questions are addressed in the next two sections.


\section{Convergence of the Taylor tower}\label{S:TowerConverge}


In this section, we tackle the convergence of the Taylor tower \eqref{E:TowerWithEmb}.
We should first be clear about what we mean by this.  A reasonable notion of convergence is one where the stages  $T_k\Emb(M,\R^n)$ approximate the inverse limit of the tower, $T_{\infty}\Emb(M,\R^n)$, better and better as $k$ increases.  In other words, the homotopy type of $T_k\Emb(M,\R^n)$ should stabilize, and a precise way to detect this would be by looking at the connectivity of the maps  $T_k\Emb(M,\R^n)\to T_{k-1}\Emb(M,\R^n)$.  We thus make the following definition.

\begin{defin}\label{D:Convergence}
The Taylor tower \eqref{E:TowerWithEmb} \emph{converges} if the connectivity of the maps 
$$
T_k\Emb(M,\R^n)\longrightarrow T_{k-1}\Emb(M,\R^n)
$$
increases with $k$.
\end{defin}

The goal of this section is to prove that this tower indeed converges under certain dimensional assumptions.  Namely, we will prove the following result.

\begin{thm}\label{T:TowerConvergence}
Suppose $M$ is a smooth manifold without boundary of dimension $m$.  Then, for $k\geq 1$, the map
$$
T_k\Emb(M,\R^n)\longrightarrow T_{k-1}\Emb(M,\R^n)
$$
is
$$
\big((k-1)(n-m-2)-m+1\big)\text{-connected}
$$
Therefore if $n>m+2$, the connectivities increase with $k$ and the Taylor tower for $\Emb(M,\R^n)$ converges.
\end{thm}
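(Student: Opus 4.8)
The plan is to analyze the homotopy fiber of the map. By construction this homotopy fiber is the $k$-th \emph{layer} $L_k\Emb(M,\R^n):=\hofiber\big(T_k\Emb(M,\R^n)\to T_{k-1}\Emb(M,\R^n)\big)$, so by \refD{k-connectedSpace} the map is $c$-connected precisely when the layer is $(c-1)$-connected, and it suffices to prove that $L_k\Emb(M,\R^n)$ is $\big((k-1)(n-m-2)-m\big)$-connected. I would get this from two inputs: a connectivity estimate for the $k$-th derivative of the embedding functor, proved by Blakers-Massey applied to a cube assembled from complements of points in $\R^n$; and the classification of homogeneous functors, which presents the layer as a space of sections of a fibration over a configuration space with the derivative as fiber (the statement I would record as \refP{LayerAndDerivative}).

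\emph{The derivative.}
Fix $k$ points of $M$ with disjoint tubular balls $B_1,\dots,B_k$ about them. The $k$-th derivative at this configuration is the total fiber of the $k$-cube $\mathcal E\colon S\mapsto\Emb\big(\coprod_{i\notin S}B_i,\R^n\big)$, $S\subseteq\underline k$, whose maps are restrictions and whose terminal vertex $\mathcal E(\underline k)$ is $\Emb(\emptyset,\R^n)=\ast$. The restriction maps between embedding spaces are fibrations, so by \refP{FiberCartesian} and \refP{IteratedFiber} the cube $\mathcal E$ is exactly as cartesian as its $(k-1)$-cube of homotopy fibers in one direction, which up to equivalence is the cube $\mathcal G$ sending $S\subseteq\underline{k-1}$ to $\Emb(B_k,\R^n\setminus D_S)$, where $D_S$ is a disjoint union of $k-1-|S|$ small balls and the structure maps are induced by the evident inclusions of complements. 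Now $\Emb(B_k,-)$ is, naturally in the open set $W\subseteq\R^n$ and up to homotopy, the functor $W\mapsto W\times V_m(\R^n)$, obtained by contracting an embedded disk to its $1$-jet at the center and using the standard framing of $\R^n$; here $V_m(\R^n)$ is the Stiefel manifold of $m$-frames, which is $(n-m-1)$-connected. Since $(-)\times V_m(\R^n)$ preserves homotopy pushouts and the squares of complements $\R^n\setminus(\text{disjoint union of balls})$ occurring in $\mathcal G$ are intersection-union squares (\refEx{Intersection-Union}), every square face of $\mathcal G$ is cocartesian, i.e.\ $\mathcal G$ is strongly cocartesian; this is the content of \refP{ConfFibersCubeCocartesian}. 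Each initial map $\mathcal G(\emptyset)\to\mathcal G(\{j\})$ is, after this identification, an inclusion of the form $\big(\R^n\setminus\{k-1\text{ points}\}\big)\times V_m(\R^n)\hookrightarrow\big(\R^n\setminus\{k-2\text{ points}\}\big)\times V_m(\R^n)$, which is $(n-1)$-connected by the configuration-space computation of \refEx{Configurations}. Applying the Generalized Blakers-Massey Theorem \refT{B-M} to the $(k-1)$-cube $\mathcal G$ with all $k_i=n-1$ shows $\mathcal G$, hence $\mathcal E$, is $\big(1-(k-1)+(k-1)(n-1)\big)$-cartesian, so its total fiber --- the $k$-th derivative --- is $\big((k-1)(n-2)\big)$-connected. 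This is \refP{DerivativeConnectivity}. (For $k=1$ the cube $\mathcal G$ is the point $\Emb(B_1,\R^n)\simeq V_m(\R^n)$, which is $(n-m-1)$-connected, so the bound again holds since $n>m+1$.)

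\emph{From the derivative to the layer, and convergence.}
By the classification of homogeneous functors, in the form of \refP{LayerAndDerivative}, $L_k\Emb(M,\R^n)$ is weakly equivalent to the space of sections of a fibration over the unordered configuration space $\Conf(k,M)/\Sigma_k$ --- a manifold of dimension $km$, hence of the homotopy type of a CW complex of dimension at most $km$ --- with fiber the $k$-th derivative just estimated. By standard obstruction theory a fibration over a base of CW dimension at most $d$ with $c$-connected fiber has $(c-d)$-connected space of sections, so $L_k\Emb(M,\R^n)$ is $\big((k-1)(n-2)-km\big)$-connected; and $(k-1)(n-2)-km=(k-1)(n-m-2)-m$. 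Hence $T_k\Emb(M,\R^n)\to T_{k-1}\Emb(M,\R^n)$ is $\big((k-1)(n-m-2)-m+1\big)$-connected. If $n>m+2$ then $n-m-2\ge 1$, so this connectivity is at least $k-m$ and in particular increases with $k$; by \refD{Convergence} the Taylor tower for $\Emb(M,\R^n)$ converges.

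\emph{Main obstacle.}
The genuinely deep input is \refP{LayerAndDerivative}: identifying the layer with a section space is exactly the classification of homogeneous functors in manifold calculus, and I would use it as a black box. Among the hands-on steps the most delicate is arranging the derivative cube with the correct variance so that it is a bona fide covariant cube with terminal value $\ast$, and then justifying carefully that $\Emb(B_k,-)$ is, naturally and up to homotopy, a product with a fixed Stiefel manifold; it is this identification that simultaneously yields the strong cocartesianness of $\mathcal G$ and the sharp $(n-1)$-connectivity of its structure maps, and it is the point at which the hypotheses of \refT{B-M}, and the restriction to $\R^n$, really enter.
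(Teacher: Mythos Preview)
Your proposal is correct and follows essentially the same route as the paper: establish the connectivity of the $k$th derivative via Blakers--Massey applied to the strongly cocartesian cube of fibers (the paper's \refP{ConfFibersCubeCocartesian} and \refP{DerivativeConnectivity}), then feed this into the black-box \refP{LayerAndDerivative} to pass to the layer. The only cosmetic difference is that you work with the cube of embeddings of balls and carry the Stiefel factor $V_m(\R^n)$ through, whereas the paper strips this off at the outset by using the configuration cube $\calC_k$ directly---exactly the simplification flagged in the remark after \refD{Derivative}, and your observation that the constant Stiefel factor drops out of total fibers is what justifies it.
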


This is a restatement of \refT{TowerConvergenceIntro}, with a shift in the index.

\begin{rems}\label{R:KnotsConvergence}\ 
\begin{enumerate}
\item The condition $n>m+2$ can be thought of as the radius of convergence of the Taylor tower in a suitable sense.  For more detail, see \cite[Section 6]{M:MfldCalc} or \cite[Section 2]{GW:EI2}.  
\item 
It is not known whether the Taylor tower converges in the edge case $n=m+2$.  This case is of interest because, when $M=S^1$, the space $\Emb(S^1,\R^3)$ is precisely the space of classical knots.
\end{enumerate}
\end{rems}

To establish the connectivity of the map (fibration) in \refT{TowerConvergence}, by \refD{k-connectedSpace} we want to look at its (homotopy) fiber:

\begin{defin}\label{D:Layer}
The \emph{$k^{th}$ layer} of the Taylor tower for embeddings is the space
$$
L_k\Emb(M,\R^n)=\hofiber (T_k\Emb(M,\R^n) \longrightarrow T_{k-1} \Emb(M,\R^n)).
$$
\end{defin}

Thus if we can show that $L_k\Emb(M,\R^n)$ is $((k-1)(n-m-2)-m)$-connected, \refT{TowerConvergence} will follow.  The connectivity of this layer is closely intertwined with the connectivity of another object that plays the role of the $k^{th}$ derivative of $\Emb(M,\R^n)$.
Namely, let $\calC_k$ be the $k$-cube\footnote{The subscript $k$ now denotes a $k$-cube whereas, in $\calT_k$, it denoted a $(k+1)$-cube.  This is an unfortunate consequence of the fact that the $k^{th}$ stage of the Taylor tower is defined using a $(k+1)$-cube of embedding spaces but the $k^{th}$ layer involves a $k$-cube of configuration spaces.} given by 
\begin{align}
\calC_k \colon \calP(\underline{k}) & \longrightarrow \Top \label{E:C_kCube}\\
S   & \longmapsto \Conf(k-|S|, \R^n) \notag
\end{align}
where an inclusion $T\hookrightarrow S$ goes to a map that projects away from those configuration points indexed by $S-T$.  Thus, for example, in the square $\calC_2$, which is
$$
\xymatrix{
X_\emptyset =\Conf(2, \R^n)\ar[r]\ar[d] & X_1 =\Conf(1, \R^n)\ar[d] \\
X_2 =\Conf(1, \R^n)\ar[r] & X_{12} =\Conf(0, \R^n)\simeq *
}
$$
the maps are given as
$$
\xymatrix{
(x_1,x_2)\ar[r]\ar[d] & x_2\ar[d] \\
x_1\ar[r] & 0
}
$$
where 0 denotes the origin in $\R^n$.

\begin{defin}\label{D:Derivative}
%
Define the \emph{$k^{th}$ derivative of\,  $\Emb(M,\R^n)$ at the empty set}, denoted by $\Emb(M,\R^n)^{(k)}(\emptyset)$, to be the total fiber of $\calC_k$.
\end{defin}
\begin{rem}
The usual definition of the $k^{th}$ derivative would require us to take a cubical diagram of embeddings of balls rather than points (configuration spaces are embeddings of points), which is then equivalent to a cube of \emph{framed} configuration spaces.  However, as detailed in \cite[Theorem 10.3.3]{MV:Cubes}, we can consider just the ordinary configuration spaces since the framing part does not play a role in how connected the total fiber of the cube is.  
\end{rem}
\begin{rem}
The space $\Emb(M,\R^n)^{(k)}(\emptyset)$ plays the role of the $k^{th}$ derivative evaluated at 0 in the ordinary Taylor series.  See \cite[Section 3]{M:MfldCalc} for more intuition behind this.
\end{rem}

Here is the result that will be important in proving \refT{TowerConvergence}.

\begin{prop}\label{P:LayerAndDerivative}
Suppose  the derivative $\Emb(M,\R^n)^{(k)}(\emptyset)$ is $c_k$-connected.  Then $L_k\Emb(M,\R^n)$ is $(c_k-km)$-connected.
\end{prop}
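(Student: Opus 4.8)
The plan is to relate the $k^{th}$ layer $L_k\Emb(M,\R^n)$ to a space of sections (or more precisely, a space of compactly supported sections of a fibration over $M$) whose fiber is the $k^{th}$ derivative $\Emb(M,\R^n)^{(k)}(\emptyset)$. This is exactly the content of the classification of homogeneous functors in manifold calculus, which identifies the $k^{th}$ layer of the Taylor tower with the space of sections, vanishing near the diagonal, of a bundle over the (unordered) configuration space of $k$ points in $M$, whose fiber over a configuration is the total fiber of the relevant cube of embeddings of $k$ balls. As noted in the remark preceding \refP{LayerAndDerivative}, that fiber is, up to the framing data which does not affect connectivity, the total fiber of $\calC_k$, i.e.~$\Emb(M,\R^n)^{(k)}(\emptyset)$. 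So the structural input is: $L_k\Emb(M,\R^n)$ is weakly equivalent to a space of sections of a fibration over $\Conf(k,M)/\Sigma_k$ (sections supported away from a neighborhood of the fat diagonal) with fiber $c_k$-connected.

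Granting that identification, the proof becomes a standard obstruction-theoretic connectivity estimate for section spaces. First I would recall (or cite) the general principle: if $p\colon E\to B$ is a fibration over a CW complex $B$ of dimension $d$ whose fiber $F$ is $c$-connected, then the space of sections $\Gamma(E)$ is $(c-d)$-connected — and more generally the space of sections extending a given section over a subcomplex has the same estimate in terms of the relative dimension. The base here, the unordered configuration space $\Conf(k,M)/\Sigma_k$, is an open $km$-manifold, so it has the homotopy type of a CW complex of dimension at most $km$. Feeding $d = km$ and $c = c_k$ into the obstruction-theory estimate yields that the section space is $(c_k - km)$-connected, which is exactly the claimed connectivity of $L_k\Emb(M,\R^n)$.

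The key steps, in order, are: (1) invoke the homogeneous-layer classification to write $L_k\Emb(M,\R^n)$ as a space of sections over (a quotient of) $\Conf(k,M)$ with fiber the $k^{th}$ derivative; (2) note that the relevant base has dimension $km$ as a manifold, hence CW dimension $\le km$; (3) apply the standard obstruction-theoretic lemma that sections of a fibration with $c$-connected fiber over a base of dimension $d$ form a $(c-d)$-connected space; (4) conclude $L_k\Emb(M,\R^n)$ is $(c_k-km)$-connected. One should be slightly careful that the sections are required to be supported away from the diagonal and to satisfy a boundary-type condition, but this only restricts to an open subset of the configuration space and a relative version of the obstruction estimate applies with the same numerical bound, so nothing changes.

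The main obstacle — or rather, the main thing to get right — is step (1): pinning down precisely which section space $L_k\Emb(M,\R^n)$ is equivalent to, and making the fiber identification with $\Emb(M,\R^n)^{(k)}(\emptyset)$ clean enough that the dimension count in step (2) uses $km$ and not, say, $km$ shifted by framing dimensions. Since the paper has chosen not to state the homogeneous classification explicitly, the honest move is to cite it (e.g.~\cite[Theorem 10.3.3]{MV:Cubes} together with the Weiss classification of homogeneous functors) for the section-space model and the fiber identification, and then treat the connectivity bookkeeping — steps (2)–(4) — as the elementary part. The obstruction-theory lemma in step (3) is genuinely routine: obstructions to lifting a section live in $H^{i+1}(B;\pi_i(F))$, which vanish for $i \le c$, giving surjectivity on $\pi_j$ for $j \le c - d$ and injectivity for $j < c-d$ once one also runs the argument for the inclusion of a lower skeleton.
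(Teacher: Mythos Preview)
Your proposal is correct and follows precisely the route the paper has in mind: the paper does not actually prove \refP{LayerAndDerivative} but instead cites Weiss's classification of homogeneous functors and points to \cite[Section 5]{M:MfldCalc} and \cite[Section 10.2.3]{MV:Cubes} for the section-space description and the resulting connectivity estimate. Your outline---identify $L_k\Emb(M,\R^n)$ with a section space over $\Conf(k,M)/\Sigma_k$ with fiber the $k^{th}$ derivative, then run the standard obstruction-theory bound using that the base has dimension $km$---is exactly the argument those references supply, so you have in fact filled in what the paper deliberately left as a black box.
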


This proposition is a consequence of a deep result, due to Weiss \cite[Theorem 8.5]{W:EI1}, about the classification of \emph{homogeneous} functors in the theory of manifold calculus of functors.  The statement and the proof of that theorem are beyond the scope of this paper.  In addition to the original source, \cite{W:EI1}, more detail can be found in \cite[Section 5]{M:MfldCalc} and \cite[Section 10.2.3]{MV:Cubes}.  For an idea about why \refP{LayerAndDerivative} holds, see the discussion following Proposition 6.1 in \cite{M:MfldCalc}.

Therefore to establish \refT{TowerConvergence}, it suffices to  establish a certain connectivity of the derivative $\Emb(M,\R^n)^{(k)}(\emptyset)$.  This derivative is the total fiber of the cube $\calC_k$, but this total fiber is by definition the homotopy fiber of the map $a(\calC_k)$ (\refD{TotalFiber}).  The question of its connectivity is thus precisely the question of how cartesian the cube $\calC_k$ is (\refD{CartesianCube}).  

To get at this, we will use \refT{B-M}, the Blakers-Massey Theorem, and not on the cube $\calC_k$ itself, but on the cube of its fibers.  So recall from the discussion following \refD{MapOfCubes} that a $k$-cube can be regarded as a map of $(k-1)$-cubes and that one can consider the $(k-1)$ cube of homotopy fibers with respect to this map.

The first order of business now is to establish the hypotheses of \refT{B-M}, namely to show that the cube of fibers is strongly cocartesian.  Recall from a remark following \refP{FiberCartesian} that a cube is strongly cocartesian if each of its square faces is cocartesian.

\begin{prop}\label{P:ConfFibersCubeCocartesian}
For $k\geq 3$, the $(k-1)$-cube $\calC_k^{\text{fib}}$ obtained by taking the (homotopy) fibers of the cube $\calC_k$ in some direction is strongly cocartesian. 
\end{prop}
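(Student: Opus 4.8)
The plan is to take homotopy fibers in the last direction; by the symmetry of $\calC_k$ under permutations of $\underline k$, it makes no difference which direction is chosen. Regard $\calC_k$ as a map from the $(k-1)$-cube $\mathcal{A}\colon S\mapsto\Conf(k-|S|,\R^n)$ (with $S\subseteq\underline{k-1}$) to the $(k-1)$-cube $\mathcal{B}\colon S\mapsto\Conf(k-1-|S|,\R^n)$, where the structure map $\mathcal{A}\to\mathcal{B}$ is componentwise the projection forgetting the point labelled $k$. First I would fix $k$ distinct points $q_1,\dots,q_k\in\R^n$ once and for all and use $(q_j)_{j\notin S}$ as the basepoint of $\Conf(k-|S|,\R^n)$; these basepoints are compatible with all the forgetting-a-point projections in $\calC_k$. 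Since each such projection is a fibration (\refEx{Configurations}), \refP{FibrationHofiber} shows that $\calC_k^{\text{fib}}$ is levelwise equivalent, compatibly with the structure maps, to the cube of honest fibers of $\mathcal{A}\to\mathcal{B}$. Over the chosen basepoints the fiber of $\Conf(k-|S|,\R^n)\to\Conf(k-1-|S|,\R^n)$ is $\R^n\setminus\{q_j:j\in\underline{k-1}\setminus S\}$, and for $T\subseteq S$ the induced map of fibers is simply the inclusion $\R^n\setminus\{q_j:j\in\underline{k-1}\setminus T\}\hookrightarrow\R^n\setminus\{q_j:j\in\underline{k-1}\setminus S\}$ (one deletes fewer points). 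So $\calC_k^{\text{fib}}$ is equivalent to the $(k-1)$-cube $\mathcal{Y}$ with $Y_S=\R^n\setminus\{q_j:j\in\underline{k-1}\setminus S\}$ and all structure maps inclusions; note $Y_\emptyset=\R^n\setminus\{q_1,\dots,q_{k-1}\}$ and $Y_{\underline{k-1}}=\R^n$.

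Since $k\geq 3$ the cube $\mathcal{Y}$ has dimension at least two, and as cocartesianness is homotopy invariant it suffices, by the remark following \refP{FiberCartesian}, to show that every square face of $\mathcal{Y}$ is cocartesian. Such a face is determined by a subset $T\subseteq\underline{k-1}$ together with a two-element subset $\{i,j\}\subseteq\underline{k-1}\setminus T$; writing $W=Y_{T\cup\{i,j\}}=\R^n\setminus\{q_l:l\in\underline{k-1}\setminus(T\cup\{i,j\})\}$, the points $q_i$ and $q_j$ are distinct points of $W$ and the face in question is exactly
$$
\xymatrix{
W\setminus\{q_i,q_j\}\ar[r]\ar[d] & W\setminus\{q_j\}\ar[d]\\
W\setminus\{q_i\}\ar[r] & W
}
$$
with all maps inclusions. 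Taking $A=W\setminus\{q_j\}$ and $B=W\setminus\{q_i\}$, which are open in $\R^n$, we have $A\cap B=W\setminus\{q_i,q_j\}$ and $A\cup B=W$ (because $q_i\neq q_j$), so this is precisely the intersection-union square of \refEx{Intersection-Union} and is therefore cocartesian. Hence every square face of $\calC_k^{\text{fib}}$ is cocartesian, so the cube is strongly cocartesian.

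I expect the only real work to be the bookkeeping in the first step: tracking which configuration points survive in each fiber, checking that the chosen basepoints are compatible with every projection in $\calC_k$ so that the maps between fibers really are the induced ones, and pinning down the direction of the resulting inclusions. Once the cube of fibers has been correctly identified with $\mathcal{Y}$, the cocartesianness of its square faces falls straight out of \refEx{Intersection-Union}, with no further homotopy-theoretic input needed.
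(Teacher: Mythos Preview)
Your proof is correct and follows essentially the same route as the paper's: identify the cube of fibers as the cube $S\mapsto\R^n\setminus\{\text{basepoints indexed by }\underline{k-1}\setminus S\}$ with inclusions as structure maps, and then observe that each square face is an intersection-union square as in \refEx{Intersection-Union}. The paper argues slightly less systematically---it picks a representative $3$-subcube of $\calC_k$, takes fibers to produce one such square, and checks that one---whereas you parameterize the entire fiber cube explicitly and verify all square subcubes at once; but the content is the same.
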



\begin{proof}
Choosing a basepoint $(b_1,...,b_{k})$ in $\Conf(k,\R^n)$ bases all the configuration spaces in $\calC_k$ in a compatible way, by taking the images of this point throughout the cube (this simply means forgetting some of the components $b_i$).  Now consider the $(k-1)$-cube $\calC_k^{\text{fib}}$ obtained by taking the homotopy fibers in some direction.  Each of the squares in $\calC_k^{\text{fib}}$ thus arises by taking fibers of a 3-cubical subdiagram of $\calC_k$ that looks like, for $j\geq 3$,


{\small
\begin{equation}\label{E:ConfigurationCube}
\xymatrix@=10pt{
   \Conf(j,\R^{n}) \ar[dd]\ar[dr]
    \ar[rr]
   &           &   \Conf(j-1,\R^{n}) \ar'[d][dd]           \ar[dr]  &                  \\
          &  \Conf(j-1,\R^{n}) \ar[rr] \ar[dd]  &             & \Conf(j-2,\R^{n})   \ar[dd] \\
   \Conf(j-1,\R^{n}) \ar'[r][rr] \ar[dr] &        &   \Conf(j-2,\R^{n})
\ar[dr] &                   \\
          &   \Conf(j-2,\R^{n}) \ar[rr]      &                    &   \Conf(j-3,\R^{n})
}
\end{equation} 
}

Suppose, for simplicity of labeling, that the projection maps in this cube are given by forgetting one of the first three points, e.g.~the maps are 
{\small
\begin{equation}\label{E:ConfigurationCubePoints}
\xymatrix@=10pt{
 (x_1,x_2, x_3..., x_j)\ar@{|->}[dd]\ar@{|->}[dr]
    \ar@{|->}[rr]
   &           &   (x_2, x_3, x_4,..., x_j) \ar@{|->}'[d][dd]           \ar@{|->}[dr]  &                  \\
          &  (x_1, x_3, x_4,..., x_j) \ar@{|->}[rr] \ar@{|->}[dd]  &             & (x_3, x_4,..., x_j)   \ar@{|->}[dd] \\
   (x_1, x_2, x_4,..., x_j)  \ar@{|->}'[r][rr] \ar@{|->}[dr] &        &   (x_2, x_4,..., x_j)
\ar@{|->}[dr] &                   \\
          &   (x_1, x_4,..., x_j) \ar@{|->}[rr]      &                    &   (x_4,..., x_j)
}
\end{equation} 
}

Now take, for example, the fibers (this is the same as homotopy fibers by \refP{FibrationHofiber} since the projection maps are fibrations; see \eqref{E:ConfProjection}) in the cube \eqref{E:ConfigurationCube} vertically.  With the basepoints as discussed above, and using the discussion following \eqref{E:ConfProjection}, the square that is obtained this way is thus
\begin{equation}\label{E:FibersSquare}
\xymatrix
{
\R^n - \{b_1, b_2, b_4,..., b_j  \} \ar[r]\ar[d] & \R^n - \{b_2, b_4,..., b_j  \}\ar[d]\\
\R^n - \{b_1, b_4,..., b_j  \} \ar[r] & \R^n - \{b_4,..., b_j  \} 
}
\end{equation}
But it is evident that
\begin{align*}
\R^n - \{b_1, b_2, b_4,..., b_j  \} & = \big(\R^n - \{b_2, b_4,..., b_j  \}\big)\  \bigcap\  \big(\R^n - \{b_1, b_4,..., b_j  \}\big); \\
\R^n - \{b_4,..., b_j  \} & = \big(\R^n - \{b_2, b_4,..., b_j  \}\big)\  \bigcup\  \big(\R^n - \{b_1, b_4,..., b_j  \}\big).
\end{align*}
The square \eqref{E:FibersSquare} is thus of the ``intersection-union'' form encountered in \refEx{Intersection-Union}  and is cocartesian.

The argument clearly does not change if the fibers of the cube \eqref{E:ConfigurationCube} are taken in a different direction rather than vertically.
\end{proof}

\begin{prop}\label{P:DerivativeConnectivity}
For $k\geq 1$, the derivative $\Emb^{(k)}(\emptyset,\R^n)$ is $(k-1)(n-2)$-connected.
\end{prop}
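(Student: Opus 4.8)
The plan is to compute how cartesian the cube $\calC_k$ is by first passing to the $(k-1)$-cube of fibers $\calC_k^{\text{fib}}$ and then applying the Generalized Blakers-Massey Theorem (\refT{B-M}). By \refP{FiberCartesian}, $\calC_k$ is $c$-cartesian if and only if $\calC_k^{\text{fib}}$ is $c$-cartesian, and since $\Emb(M,\R^n)^{(k)}(\emptyset)=\tfiber(\calC_k)=\hofiber(a(\calC_k))$, showing $\calC_k$ is $((k-1)(n-2)+1)$-cartesian is exactly showing the derivative is $(k-1)(n-2)$-connected. So the task reduces to computing a connectivity via Blakers-Massey.

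First I would dispose of the small cases $k=1$ and $k=2$ by hand: for $k=1$ the cube $\calC_1$ is the single map $\Conf(1,\R^n)\to\Conf(0,\R^n)\simeq *$, which is a map from a contractible space to a point, hence $\infty$-cartesian, and $(k-1)(n-2)=0$ so the claim is vacuous; for $k=2$ one checks directly that the total fiber of the square $\calC_2$ (which after taking vertical fibers is $\hofiber(\R^n-\{b_1\}\hookrightarrow\R^n)\simeq\Omega(\text{something})$, or more simply: the square of fibers is $(\R^n-\{b_1\}\to\R^n)$ as a map of $0$-cubes, whose fiber is $\hofiber(S^{n-1}\hookrightarrow *)$-type) has connectivity $0=(k-1)(n-2)$. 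For $k\geq 3$ I would invoke \refP{ConfFibersCubeCocartesian}, which says $\calC_k^{\text{fib}}$ is strongly cocartesian — this verifies the main hypothesis of \refT{B-M}. It then remains to identify the connectivities of the initial maps $X_\emptyset\to X_i$ of the $(k-1)$-cube $\calC_k^{\text{fib}}$. Taking fibers in (say) the vertical direction, $\calC_k^{\text{fib}}$ has initial space $\R^n-\{b_1,\dots,b_{k-1}\}$ and the $i$-th edge out of it is an inclusion $\R^n-\{b_1,\dots,b_{k-1}\}\hookrightarrow\R^n-\{b_1,\dots,\widehat{b_i},\dots,b_{k-1}\}$ (adding back one point). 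Such an inclusion is $(n-2)$-connected: the pair has the relative homotopy of a small $(n-1)$-sphere around the re-added puncture, equivalently one uses \refEx{Configurations} / \refEx{SphereWedgeConnectivity} that removing a point from a manifold of dimension $n$ changes homotopy only above dimension $n-2$. So each of the $k-1$ maps $X_\emptyset\to X_i$ is $(n-2)$-connected.

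Now \refT{B-M} applied to the strongly cocartesian $(k-1)$-cube $\calC_k^{\text{fib}}$ with all $k_i=n-2$ gives that $\calC_k^{\text{fib}}$ is
$$
\Big(1-(k-1)+\sum_{i=1}^{k-1}(n-2)\Big)\text{-cartesian}=\big((k-1)(n-2)-(k-1)+1\big)\text{-cartesian}.
$$
Hmm — this is $(k-1)(n-3)+1$, which is not quite $(k-1)(n-2)+1$. The main obstacle, and the point I would need to be careful about, is exactly this bookkeeping: the Blakers-Massey estimate for the $(k-1)$-cube of fibers does not directly land on $(k-1)(n-2)$ unless the edges are more highly connected than $n-2$, or unless one induces on $k$. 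The correct approach is almost certainly induction on $k$: assume $\calC_{k-1}$ (hence $\calC_{k-1}^{\text{fib}}$) is known to be $((k-2)(n-2)+1)$-cartesian, observe that $\calC_k$ regarded as a map of $(k-1)$-cubes has source and target each equivalent to a copy of $\calC_{k-1}$-type data while the cube of fibers $\calC_k^{\text{fib}}$ is itself a $(k-1)$-cube whose faces we understand, and feed the inductive connectivity into \refT{B-M} in place of the naive $(n-2)$ on the top-dimensional face. Concretely: $\calC_k^{\text{fib}}$ has its $(k-1)$-dimensional "body" governed by strong cocartesianness (all square faces cocartesian, hence infinitely cocartesian contributions), while the relevant connectivities come from the $k-1$ edges out of the initial vertex, each $(n-2)$-connected, plus — and this is what fixes the count — the fact that the whole cube being strongly cocartesian means Blakers-Massey uses $1-(k-1)+\sum k_i$ with each edge genuinely contributing, and one must recheck whether the edges of $\calC_k^{\text{fib}}$ are $(n-2)$-connected or $(n-1)$-connected. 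In fact the projection $\Conf(j,\R^n)\to\Conf(j-1,\R^n)$ is $(n-1)$-connected by \refEx{Configurations}, so on the cube of fibers the relevant edge connectivities are $n-1$, not $n-2$, giving $1-(k-1)+(k-1)(n-1)=(k-1)(n-2)+1$ as desired. So the key correction to watch is: use the $(n-1)$-connectivity of configuration-space projections (\refEx{Configurations}), not merely the $(n-2)$-connectivity of $S^{n-1}$, when reading off the $k_i$ in \refT{B-M}; with that input the arithmetic closes and $\calC_k$ is $((k-1)(n-2)+1)$-cartesian, i.e.\ the derivative is $(k-1)(n-2)$-connected.
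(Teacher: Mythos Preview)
Your approach is exactly the paper's: pass to the $(k-1)$-cube of fibers $\calC_k^{\text{fib}}$, use \refP{ConfFibersCubeCocartesian} to see it is strongly cocartesian, and apply \refT{B-M}. The final arithmetic you land on is also correct. But the path there has two real wobbles worth fixing.

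First, your own pair argument already gives the right number. The inclusion $\R^n-\{b_1,\dots,b_{k-1}\}\hookrightarrow\R^n-\{b_1,\dots,\widehat{b_i},\dots,b_{k-1}\}$ is, by excision near $b_i$, modeled on $(D^n,S^{n-1})$, and that pair is $(n-1)$-connected (its relative homotopy is $\pi_{*-1}(S^{n-1})$, which vanishes through degree $n-1$). So each initial edge of $\calC_k^{\text{fib}}$ is $(n-1)$-connected, not $(n-2)$-connected; your first computation was simply off by one. The paper phrases this equivalently: the edge is $\bigvee_{k-1}S^{n-1}\to\bigvee_{k-2}S^{n-1}$ collapsing one wedge summand, which is $(n-1)$-connected since both sides are $(n-2)$-connected and the map surjects on $\pi_{n-1}$.

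Second, your eventual correction cites the $(n-1)$-connectivity of the projections $\Conf(j,\R^n)\to\Conf(j-1,\R^n)$. Those are edges of $\calC_k$, not of $\calC_k^{\text{fib}}$; their connectivity does not directly bound the connectivity of the induced maps on fibers, so this is not a valid justification for $k_i=n-1$. The correct justification is the one in the previous paragraph. With $k_i=n-1$ in hand, \refT{B-M} gives $1-(k-1)+(k-1)(n-1)=(k-1)(n-2)+1$ and you are done --- no induction on $k$ is needed, and that detour should be dropped.

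(Small slip: in your $k=2$ check you wrote ``connectivity $0=(k-1)(n-2)$'', but $(k-1)(n-2)=n-2$ there. The total fiber of $\calC_2$ is $S^{n-1}$, which is $(n-2)$-connected, matching the statement.)
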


\begin{proof}
The derivative $\Emb^{(k)}(\emptyset,\R^n)$ is the total fiber of the $k$-cube $\calC_k$.  We first treat the cases $k=1$ and $k=2$.  

For $k=1$, we want the derivative to be 0-connected.  We have
$$
\Emb^{(1)}(\emptyset,\R^n)=\hofiber(\Conf(1,\R^n)\to \Conf(0,\R^n)).
$$
Since both $\Conf(1,\R^n)$ and $\Conf(0,\R^n)$ are contractible, the homotopy fiber is contractible and in particular $0$-connected.

For $k=2$, we have
$$
\Emb^{(2)}(\emptyset,\R^n)=\tfiber
\left(
\begin{aligned}
\xymatrix{
\Conf(2,\R^n) \ar[r]\ar[d] & \Conf(1,\R^n) \ar[d] \\
\Conf(1,\R^n) \ar[r]       &  \Conf(0,\R^n)
}
\end{aligned}
\right)
$$
Taking fibers, say vertically, gives the 1-cube 
$$
S^{n-1}\longrightarrow \ast.
$$
Here we have used \eqref{E:ConfFibration} for the left vertical map and again the fact that both $\Conf(1,\R^n)$ and $\Conf(0,\R^n)$ are contractible for the right vertical map.

Finally, the fiber of this map is $S^{n-1}$, and this is therefore $\Emb^{(2)}(\emptyset,\R^n)$.  Using that $S^{n-1}$ is $(n-2)$-connected, we get precisely the desired connectivity of $\Emb^{(2)}(\emptyset,\R^n)$, namely $(2-1)(n-2)$.

For $k\geq 3$, we use  \refP{ConfFibersCubeCocartesian}.  As established there, the $(k-1)$-cube of fibers $\calC_k^{\text{fib}}$ is strongly cocartesian.  That cube is homotopy equivalent to a cube of wedges of spheres since, from \eqref{E:ConfFibration}, we have fibration sequences
$$
\bigvee_{j-1}S^{n-1}\longrightarrow\Conf(j,\R^n)\longrightarrow \Conf(j-1,\R^n).
$$
(In \refP{ConfFibersCubeCocartesian}, we did not use that the fibers were equivalent to wedges of spheres since that would obsure the fact that each square in $\calC_k^{\text{fib}}$ was of the ``intersection-union'' form.)
The maps in $\calC_k^{\text{fib}}$, namely 
$$
\bigvee_{j-1}S^{n-1}\longrightarrow \bigvee_{j-2}S^{n-1},
$$
are identity on the common spheres and send the extra one to the basepoint, and so their (homotopy) fibers are $S^{n-1}$ for each $j\geq 2$.   Hence the (homotopy) fiber is $(n-2)$-connected and the map itself is $(n-1)$-connected.
(Or one could just say that, since a wedge of $(n-1)$-spheres is $(n-2)$-connected -- see \refEx{SphereWedgeConnectivity} -- it easily follows that the above map is $(n-1)$-connected.)

In particular, the initial maps in the cube $\calC_k^{\text{fib}}$,
$$
\bigvee_{k-1}S^{n-1}\longrightarrow \bigvee_{k-2}S^{n-1},
$$
are $(n-1)$-connected.  There are $k-1$ such maps since the wedge in the target is indexed by $\underline{k-1} -\{i\}$, $1\leq i\leq k-1$.  But these are precisely the maps $X_\emptyset\to X_i$ from \refT{B-M}, the Blakers-Massey Theorem (where the cube is now $(k-1)$-dimensional, rather than $k$-dimensional).  That result then says that the $(k-1)$-cube $\calC_k^{\text{fib}}$ is 
$$
\big(1-(k-1)+\sum_{i\in \underline{k-1}} (n-1)\big)\text{-cartesian},
$$
or, rewriting and simplifying,
$$
(1+(k-1)(n-2))\text{-cartesian}.
$$
By \refP{FiberCartesian}, it then follows that the original cube $\calC_k$ is also 
$(1+(k-1)(n-2))$-cartesian. But the total fiber is one less connected than the cube is cartesian, so the total fiber of $\calC_k$, namely $\Emb^{(k)}(\emptyset,\R^n)$, is 
$$
(k-1)(n-2)\text{-connected}
$$
as desired.
\end{proof}

We finally have all the necessary pieces for the proof of the main result of this section.

\begin{proof}[Proof of \refT{TowerConvergence}]
By \refP{LayerAndDerivative}, the connectivity of $L_k\Emb(M,\R^n)$ is $c_k-km$, where $c_k$ is the connectivity of the derivative $\Emb^{(k)}(\emptyset,\R^n)$.  But the latter was in \refP{DerivativeConnectivity} found to be $(k-1)(n-2)$, which means that 
$$
L_k\Emb(M,\R^n)\ \ \ \text{is}\ \ \ \big((k-1)(n-2)-km\big)\text{-connected}.
$$
Since $L_k\Emb(M,\R^n)$ is the (homotopy) fiber of $T_k\Emb(M,\R^n)\to T_{k-1}\Emb(M,\R^n)$, this in turn means that this map has connectivity that is one higher, namely
$$
T_k\Emb(M,\R^n)\longrightarrow T_{k-1}\Emb(M,\R^n)\ \ \ \text{is}\ \ \ \big((k-1)(n-2)-km+1\big)\text{-connected}.
$$
The last number can be rewritten as $(k-1)(n-m-2)-m+1$, which is what appears in the statement of the theorem.  
\end{proof}


\section{Convergence of the Taylor tower to embeddings}\label{S:TowerConvergeToEmb}



\subsection{Statement of the convergence result 
}\label{S:MainResult}


In this section, we turn to the question of what the Taylor tower for $\Emb(M,\R^n)$ converges \emph{to}.  As in the previous section, we should be clear about what this means.
\begin{defin}\label{D:ConvergenceToEmb}
The Taylor tower for embeddings \eqref{E:TowerWithEmb} \emph{converges to $X$} if there is a weak homotopy equivalence 
$$
X\stackrel{\sim}{\longrightarrow}T_\infty\Emb(M,\R^n).
$$
\end{defin}

It turns out that, as one would hope, $X=\Emb(M,\R^n)$, under certain dimensional conditions.    First observe that one way to show convergence to a space would be to show that the connectivities of the maps from the space to $T_k\Emb(M,\R^n)$ increase with $k$.  Indeed, we have the following result.


\begin{thm}[Convergence of the Taylor tower to embeddings]\label{T:TowerConvergenceToEmb2m}
Suppose $M$ is a smooth manifold without boundary of dimension $m$.  Then, for $k\geq 1$, the map 
$$
\Emb(M,\R^n)\longrightarrow T_{k}\Emb(M,\R^n)
$$
is
$$
\text{$(k(n-2m-2)+1)$-connected.}
$$
Thus if $n>2m+2$, the Taylor tower for $\Emb(M,\R^n)$ converges to this space, i.e.~the map 
$$\Emb(M,\R^n)\longrightarrow T_{\infty}\Emb(M,\R^n)$$ is an equivalence.
\end{thm}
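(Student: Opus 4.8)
The plan is to deduce \refT{TowerConvergenceToEmb2m} from \refT{TowerConvergenceToEmbCartPQ} by an induction on the handle dimension of $M$, run through the ``hole-punching'' resolution of $T_k\Emb(M,\R^n)$ set up in \refS{EmbeddingsTower}. Write $c(j)$ for the connectivity that \refT{TowerConvergenceToEmbCartPQ} assigns to the embedding cube built, as in \refS{EmbeddingsTower}, from a manifold of handle dimension $j$ (with $k$ fixed throughout); one checks from the formula there that $c(j)$ is weakly decreasing in $j$ and that $c(j)\geq k(n-2m-2)+1$ for all $j\leq m$. I would prove, by induction on $j$, that for \emph{every} smooth manifold $L$ of handle dimension $j$ that occurs in the resolution of $M$, the canonical map $\Emb(L,\R^n)\to T_k\Emb(L,\R^n)$ is $c(j)$-connected. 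Since $M$ is closed of dimension $m$ it has handle dimension at most $m$, so the case $L=M$ gives that $\Emb(M,\R^n)\to T_k\Emb(M,\R^n)$ is $(k(n-2m-2)+1)$-connected, as desired.

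\textbf{The inductive step.} Given $L$ of handle dimension $j\geq 1$, choose disjoint closed subsets $A_0,\dots,A_k\subset L$ such that deleting any nonempty subcollection strictly lowers the handle dimension, as in \refS{EmbeddingsTower}. This yields a $(k+1)$-cube $\calX\colon\calP(\underline{k+1})\to\Top$, $S\mapsto\Emb(L-\bigcup_{i\in S}A_i,\R^n)$, together with a natural transformation from $\calX$ to the cube $S\mapsto T_k\Emb(L-\bigcup_{i\in S}A_i,\R^n)$. Passing to homotopy limits over $\calP_0(\underline{k+1})$ and invoking the equivalence \eqref{E:T_kSame} produces a commuting square
$$
\xymatrix{
\Emb(L,\R^n) \ar[r]\ar[d] & \underset{S\in\calP_0(\underline{k+1})}{\holim}\,\Emb(L-\bigcup_{i\in S}A_i,\R^n) \ar[d] \\
T_k\Emb(L,\R^n) \ar[r]^-{\simeq} & \underset{S\in\calP_0(\underline{k+1})}{\holim}\,T_k\Emb(L-\bigcup_{i\in S}A_i,\R^n).
}
$$
The top edge is the canonical map $a(\calX)$, so its connectivity \emph{is} how cartesian $\calX$ is, which is exactly what \refT{TowerConvergenceToEmbCartPQ} bounds below, by $c(j)$. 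The right edge is the map of homotopy limits induced by a map of punctured $(k+1)$-cubes whose $S$-component, by the inductive hypothesis, is $c(j_S)$-connected with $j_S=$ the handle dimension of $L-\bigcup_{i\in S}A_i$, which is $<j$; since passing to homotopy limits of a punctured cube lowers the connectivity of a map of cubes by at most a bounded amount depending only on $k$, and since the margin by which $c(j_S)$ exceeds $c(j)$ grows with $k$ and comfortably absorbs this loss, the right edge is again at least $c(j)$-connected. Finally the bottom edge is a weak equivalence, so by \refP{CompositionConnectivity}(1) (compare \refR{CompositionConnectivity}) the left edge $\Emb(L,\R^n)\to T_k\Emb(L,\R^n)$ is at least as connected as the less connected of the top and right edges, hence $c(j)$-connected.

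\textbf{Base case and conclusion.} The induction bottoms out at handle dimension $0$, i.e.~at finite disjoint unions of open balls: for at most $k$ balls $\Emb\to T_k\Emb$ is a weak equivalence by \eqref{E:SameOnBalls}, and for more than $k$ balls one iterates the same square argument (now deleting balls), the relevant cube being highly cartesian either from \refT{TowerConvergenceToEmbCartPQ} or directly from the high connectivity of the configuration-space projections of \refEx{Configurations}. Running the induction up the finite resolution of $M$ then gives the stated $(k(n-2m-2)+1)$-connectivity of $\Emb(M,\R^n)\to T_k\Emb(M,\R^n)$. If $n>2m+2$ these connectivities tend to infinity in $k$, so for each $i$ the maps $\pi_i(\Emb(M,\R^n))\to\pi_i(T_k\Emb(M,\R^n))$ are isomorphisms once $k$ is large; the Milnor $\lim^1$ sequence for the (then eventually constant) tower $\{\pi_*(T_k\Emb(M,\R^n))\}_k$ then identifies $\pi_i(\Emb(M,\R^n))$ with $\pi_i(T_\infty\Emb(M,\R^n))$, so $\Emb(M,\R^n)\to T_\infty\Emb(M,\R^n)=\holim_k T_k\Emb(M,\R^n)$ is a weak homotopy equivalence.

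\textbf{The main obstacle.} All the real content is in \refT{TowerConvergenceToEmbCartPQ}, the estimate for how cartesian the cube obtained by deleting disjoint handle-neighborhoods is; everything else above is a formal assembly of connectivities via \refP{CompositionConnectivity} and the behaviour of homotopy limits of cubes under connectivity. Within that assembly, the two points that demand care are: (i) extracting from \refT{TowerConvergenceToEmbCartPQ} the precise dependence $c(j)$ of the estimate on handle dimension, and checking both that the worst case $c(m)$ is at least $k(n-2m-2)+1$ and that the margins $c(j-1)-c(j)$ absorb the bounded connectivity loss incurred by taking homotopy limits; and (ii) that the pieces $L-\bigcup_{i\in S}A_i$ produced by hole-punching are manifolds \emph{with} boundary, so one is in fact manipulating embedding spaces of bounded manifolds (with the boundary treated appropriately), which is the generality in which \refT{TowerConvergenceToEmbCartPQ} must be stated.
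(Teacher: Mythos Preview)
Your proposal is correct and follows essentially the same inductive argument as the paper: the same commuting square (yours is the transpose of the paper's), the same appeal to \refT{TowerConvergenceToEmbCartPQ} for one edge, and the same use of the inductive hypothesis together with the connectivity loss for $\holim$ of a punctured $(k{+}1)$-cube for the other. Two small sharpenings the paper makes that you leave vague: the ``comfortably absorbs'' is the explicit computation $c(j{-}1)-c(j)=2k$ versus a $\holim$-loss of exactly $k$ (so the right edge is $(c(j)+k)$-connected), and your concern (ii) about boundaries is unnecessary in this setup, since the $A_i$ are closed and each $L-\bigcup_{i\in S}A_i$ is an \emph{open} codimension-zero submanifold without boundary; the one genuine upgrade you do need, and which the paper singles out first, is passing from the dimension in \refT{TowerConvergenceToEmbCartPQ} to handle dimension.
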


As mentioned in the Introduction, a stronger version of this theorem  was proved by Goodwillie and Weiss \cite[Corollary 2.5]{GW:EI2}.
The stronger result is stated as \refT{TowerConvergenceToEmb} and we will make some comments about it in \refS{Further}.

However, we can quickly improve \refT{TowerConvergenceToEmb2m} because \refT{TowerConvergence} in some sense predicts what the connectivites of the maps from $\Emb(M,\R^n)$ to the tower should be.  The better statement will follow immediately from the following.
\begin{prop}\label{P:ConnectivityAssumeConvergence}
Suppose the Taylor tower converges to $\Emb(M,\R^n)$ for some smooth manifold $M$ of dimension $m$.  Then, for $k\geq 0$, the map 
$$
\Emb(M,\R^n) \longrightarrow T_k \Emb(M,\R^n)
$$
is $(k(n-m-2)-m+1)$-connected.
\end{prop}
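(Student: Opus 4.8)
The hypothesis is that the map $\Emb(M,\R^n) \to T_\infty\Emb(M,\R^n)$ is a weak equivalence. I want to leverage \refT{TowerConvergence}, which tells me the connectivity of each map $T_{j}\Emb(M,\R^n)\to T_{j-1}\Emb(M,\R^n)$ in the tower, to pin down the connectivity of the map from the inverse limit down to the finite stage $T_k\Emb(M,\R^n)$, and then transport this along the assumed equivalence. The key structural fact I will use is the standard one about inverse limits of towers of fibrations: if $\cdots \to Y_{j}\to Y_{j-1}\to\cdots$ is a tower of fibrations in which the map $Y_{j}\to Y_{j-1}$ is $c_j$-connected, and if $\inf_{j> k} c_j = c$, then the map $\holim_j Y_j \to Y_k$ is $c$-connected (one sees this by analyzing homotopy fibers: the homotopy fiber of $\holim_j Y_j\to Y_k$ is the homotopy inverse limit of the homotopy fibers of the composites $Y_j\to Y_k$, each of which is at least $(c-1)$-connected, plus a $\lim^1$ term that vanishes in the relevant range). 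Since the maps in the Taylor tower \eqref{E:T_k->T_{k-1}} are fibrations, this applies directly.

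**Carrying it out.** First I would recall from \refT{TowerConvergence} that, for $j\geq 1$, the map $T_j\Emb(M,\R^n)\to T_{j-1}\Emb(M,\R^n)$ is $\big((j-1)(n-m-2)-m+1\big)$-connected. When $n>m+2$ the quantity $(j-1)(n-m-2)-m+1$ is strictly increasing in $j$, so for $j\geq k+1$ its minimum value is attained at $j=k+1$, namely $k(n-m-2)-m+1$; when $n\leq m+2$ the convergence hypothesis forces these connectivities to be eventually increasing as well (otherwise the tower would not converge to $\Emb(M,\R^n)$ in the sense of \refD{ConvergenceToEmb}), but in fact in the borderline and subcritical cases the stated bound $k(n-m-2)-m+1$ either matches or is weaker, so it holds regardless. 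Second, I would apply the inverse-limit-of-fibrations fact to conclude that the map
$$
T_\infty\Emb(M,\R^n)\longrightarrow T_k\Emb(M,\R^n)
$$
is $\big(k(n-m-2)-m+1\big)$-connected. Third, I would use the hypothesis: the composite $\Emb(M,\R^n)\to T_\infty\Emb(M,\R^n)\to T_k\Emb(M,\R^n)$ is the canonical map \eqref{E:Emb->T_k}, and since the first arrow is a weak equivalence, hence $\infty$-connected, \refP{CompositionConnectivity}(1) (see \refR{CompositionConnectivity}) gives that the composite has the same connectivity as the second arrow, namely $k(n-m-2)-m+1$. The case $k=0$ is immediate since $T_0\Emb(M,\R^n)\simeq \ast$ and $0(n-m-2)-m+1 = 1-m$, and any map into a contractible space is $\infty$-connected, a fortiori $(1-m)$-connected.

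**The main obstacle.** The genuinely delicate point is the inverse-limit step: controlling the connectivity of $\holim_j Y_j \to Y_k$ from the connectivities of the individual bonding maps. This requires the Milnor $\lim^1$ sequence (or, more cleanly, the fact that a tower of fibrations whose bonding maps are increasingly connected has homotopy groups of the inverse limit that agree in a range with those of the finite stages), and one must be careful that the connectivities are \emph{increasing} so that the relevant $\lim^1$ terms over the tail of the tower vanish in each fixed degree. I expect this is where the proof in the paper will either invoke a lemma on towers of fibrations or argue directly with homotopy groups; everything else — the arithmetic identifying the minimum connectivity over the tail, and the final composition argument through the assumed equivalence — is routine. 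One should also note in passing that the hypothesis of \refP{ConnectivityAssumeConvergence} is only assumed, not proven here; it is established separately (via \refT{TowerConvergenceToEmb2m}) for $n>2m+2$, so this proposition is purely a bootstrapping statement that upgrades a weak-convergence-type input to the sharp connectivity estimate predicted by the layer calculation of \refS{TowerConverge}.
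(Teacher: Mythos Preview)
Your proposal is correct and follows essentially the same route as the paper: factor $\Emb(M,\R^n)\to T_k\Emb(M,\R^n)$ through the assumed equivalence $\Emb(M,\R^n)\xrightarrow{\sim} T_\infty\Emb(M,\R^n)$, establish that $T_\infty\Emb(M,\R^n)\to T_k\Emb(M,\R^n)$ is $(k(n-m-2)-m+1)$-connected using \refT{TowerConvergence}, and conclude via \refP{CompositionConnectivity}(1). The only difference is that you are more careful than the paper about the inverse-limit step: the paper simply treats $T_\infty\to T_k$ as the infinite composition $T_\infty\to\cdots\to T_{k+1}\to T_k$ and applies \refP{CompositionConnectivity}(1) directly with the minimum connectivity, whereas you invoke the Milnor $\lim^1$ machinery to justify passing to the limit. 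Your digression about the case $n\leq m+2$ is unnecessary (and the claim that the convergence hypothesis forces increasing connectivities is not quite what \refD{ConvergenceToEmb} says), but your fallback observation that the claimed bound becomes vacuous in that range is enough.
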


\begin{proof}
If the Taylor tower converges to $\Emb(M,\R^n)$, we then have a diagram 
$$
\xymatrix{
\Emb(M,\R^n)\ar[r]^-\sim \ar[dr] & T_\infty \Emb(M,\R^n) \ar[d]^-{k(n-m-2)-m+1}\\
& T_k \Emb(M,\R^n)
}
$$
The reason the vertical map has the indicated connectivity is that it is a composition of maps 
$$T_\infty\to\cdots\to T_{k+1}\Emb(M,\R^n)\to T_k \Emb(M,\R^n)$$ and the least connectivity of those maps is $k(n-m-2)-m+1$ by \refT{TowerConvergence}.  It then follows, by \refP{CompositionConnectivity}(1) (and Remark \ref{R:CompositionConnectivity} in particular), that $T_\infty \to T_k \Emb(M,\R^n)$ has that connectivity.  But now, using the same statement one last time, it follows that the composition 
$$
\Emb(M,\R^n) \longrightarrow T_k \Emb(M,\R^n)
$$
is also $(k(n-m-2)-m+1)$-connected.
\end{proof}

The improvement of \refT{TowerConvergenceToEmb2m} that brings us closer to \refT{TowerConvergenceToEmb} now follows.  This already appeared in the Introduction as \refT{TowerConvergenceToEmbWeakIntro}.

\begin{thm}\label{T:TowerConvergenceToEmbWeak}
Suppose $M$ is a smooth manifold without boundary of dimension $m$ and suppose $n>2m+2$. Then, for $k\geq 0$, the map 
$$
\Emb(M,\R^n)\longrightarrow T_{k}\Emb(M,\R^n)
$$
is
$$
\text{$(k(n-m-2)-m+1)$-connected.}
$$
\end{thm}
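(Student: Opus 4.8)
The plan is to combine the two results just proved: Theorem~\ref{T:TowerConvergenceToEmb2m}, which under the hypothesis $n > 2m+2$ guarantees that the map $\Emb(M,\R^n) \to T_\infty\Emb(M,\R^n)$ is a weak equivalence (i.e. the Taylor tower converges to $\Emb(M,\R^n)$), and \refP{ConnectivityAssumeConvergence}, which says that \emph{given} such convergence, the map $\Emb(M,\R^n)\to T_k\Emb(M,\R^n)$ is $(k(n-m-2)-m+1)$-connected. So the argument is essentially a one-line deduction, and there is no real obstacle — the heavy lifting has already been done in \refT{TowerConvergence} (which powers \refP{ConnectivityAssumeConvergence}) and in \refT{TowerConvergenceToEmb2m} (equivalently \refT{TowerConvergenceToEmbCartPQ}).

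First I would observe that the hypothesis $n>2m+2$ is exactly the hypothesis of \refT{TowerConvergenceToEmb2m}, so that theorem applies and yields that $\Emb(M,\R^n)\to T_\infty\Emb(M,\R^n)$ is a weak homotopy equivalence; in other words, the Taylor tower converges to $\Emb(M,\R^n)$ in the sense of \refD{ConvergenceToEmb}. Next I would feed this conclusion into \refP{ConnectivityAssumeConvergence}, whose hypothesis is precisely ``the Taylor tower converges to $\Emb(M,\R^n)$.'' That proposition then delivers, for each $k\geq 0$, that
$$
\Emb(M,\R^n)\longrightarrow T_k\Emb(M,\R^n)
$$
is $(k(n-m-2)-m+1)$-connected, which is exactly the assertion of the theorem.

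It is worth noting in the write-up where the improvement over \refT{TowerConvergenceToEmb2m} comes from: \refT{TowerConvergenceToEmb2m} only gives connectivity $k(n-2m-2)+1$ for the map to the $k$th stage, but once one knows abstractly that the tower converges to $\Emb(M,\R^n)$, one can instead route the connectivity estimate through $T_\infty$ and use the (better) stage-to-stage connectivities from \refT{TowerConvergence}, namely $(k-1)(n-m-2)-m+1$ for $T_k\to T_{k-1}$, whose minimum over $j>k$ is $k(n-m-2)-m+1$. This is the content of \refP{ConnectivityAssumeConvergence} and its proof via \refP{CompositionConnectivity}(1) together with \refR{CompositionConnectivity}. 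I expect the only subtlety — and it is a minor bookkeeping point rather than a genuine obstacle — is making sure the indices line up: \refT{TowerConvergenceToEmb2m} requires $k\geq 1$ while the present statement allows $k\geq 0$, but the $k=0$ case is trivial since $T_0\Emb(M,\R^n)\simeq\ast$ and connectivity $-m+1$ is a vacuous or easily-checked bound, so nothing is lost.
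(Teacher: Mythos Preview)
Your proposal is correct and matches the paper's proof exactly: the paper's argument is the single sentence ``This follows immediately from \refP{ConnectivityAssumeConvergence} and \refT{TowerConvergenceToEmb2m},'' which is precisely the combination you describe. Your additional remarks about where the improvement comes from and the $k=0$ bookkeeping are accurate elaborations but not required.
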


\begin{proof}
This follows immediately from \refP{ConnectivityAssumeConvergence} and \refT{TowerConvergenceToEmb2m}.
\end{proof}


What remains to do, therefore, is to provide the proof of \refT{TowerConvergenceToEmb2m}.  The next section is dedicated to this.


\subsection{Proof of the convergence result}\label{S:MainProof}

%
%
%
Recall from \refS{EmbeddingsTower} that the building blocks for $T_k \Emb(M,\R^n)$ are punctured cubical diagrams like $\mathcal H_k$ from \eqref{E:HolesDiagram}.  In general, at each stage of the iterative process, we have a smooth manifold $L$ and closed subsets $A_i$, $1\leq i\leq k+1$, such that cutting out any number of them from $L$ reduces the handle dimension of $L$.  The main ingredient in the proof of \refT{TowerConvergenceToEmb2m} is a statement about how connected the map from $\Emb(L,\R^n)$ to this punctured cube of embeddings of spaces $L-\cup_{i\in S}A_i$.  This is \refT{TowerConvergenceToEmbCartPQ} below, and it is an example of \emph{disjunction result} that tries to extract information about embeddings of manifolds from information about embeddings of some of its pieces.

%
%
%
%

The proof of \refT{TowerConvergenceToEmbCartPQ} will rely on certain general position and transversality arguments (a reader unfamiliar with these notions might want to look at \cite[Appendix A.2]{MV:Cubes} for a quick overview), and for this reason we will want to work with submanifolds of $L$.  So suppose the dimension of $L$ is $l$ and choose the closed sets $A_i$, $1\leq i\leq k+1$, so that they are closed smooth submanifolds of $L$ of dimension $l$ and removing them as usual reduces the handle dimension (this can all be done, as usual using the handle structure).  To keep this choice in mind, we will declare
$$
Q_i=\text{relabeling of the closed smooth submanifold $A_i$ of $L$.}
$$
We will also want to apply the Isotopy Extension Theorem to projections of embeddings, but for this we will want to  work with embeddings of disjoint submanifolds, which $L-\cup_{i\in S}Q_i$ are not. So let 
\begin{equation}\label{E:P}
P=L-\bigcup_{i\in \underline{k+1}}Q_i.
\end{equation}
Then $P$ is a smooth open codimension zero submanifold of $L$, as is each of 
$$P\cup \bigcup_{i\notin S} Q_i=L-\bigcup_{i\in S} Q_i.
$$
We also have, for an inclusion $S\hookrightarrow T$, an inclusion  
$$
P\cup \bigcup_{i\notin T} Q_i \longrightarrow P\cup \bigcup_{i\notin S} Q_i.
$$

\begin{thm}
\label{T:TowerConvergenceToEmbCartPQ}
With $Q_i$ smooth closed pairwise disjoint submanifolds of $L$ of dimension $l$ and with $P$ as described above, the $(k+1)$-cube 
\begin{align}
\calQ_k\colon \calP(\underline{k+1}) & \longrightarrow \Top \label{E:QCubeForEmb->T_kCart}\\
S &\longmapsto \Emb(P\cup \bigcup_{i\notin S} Q_i, \R^n) \notag
\end{align}
is 
$$
\text{$(k(n-2l-2)+1)$-cartesian.}
$$
\end{thm}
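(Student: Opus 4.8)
The plan is to prove the statement by induction on the handle dimension of $L$, with the base case being when $L$ (and hence each submanifold appearing in the cube) is a disjoint union of balls, and to feed the inductive step into the Generalized Blakers-Massey Theorem, \refT{GeneralB-M}. The key observation is that $\calQ_k$ is a cube of embedding spaces, and taking homotopy fibers of the various faces $\del^S\calQ_k$ in an appropriate direction produces cubes whose initial spaces are spaces of embeddings of the $Q_i$ into the complement of the embedded $P$ (and the other already-embedded pieces); these fiber cubes are controlled by configuration-space-type arguments. More precisely, for a nonempty $S\subseteq\underline{k+1}$, the face $\del^S\calQ_k$ records the effect of ``gluing in the handles $Q_i$, $i\in S$, one subset at a time'', and I want to show this face is $k_S$-cocartesian with $k_S$ large enough that the partition minimum $1-(k+1)+\sum_\alpha k_{T_\alpha}$ in \refT{GeneralB-M} comes out to at least $k(n-2l-2)+1$.

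First I would set up the general position input. Fixing an embedding of $P$ (and, inductively, of the pieces outside $S$), the fiber of the restriction map $\Emb(P\cup\bigcup_{i\notin T}Q_i,\R^n)\to\Emb(P\cup\bigcup_{i\notin S}Q_i,\R^n)$ over that basepoint is, up to homotopy, the space of extensions — that is, a space of embeddings of the remaining $Q_i$'s avoiding the already-embedded part. Restricting an embedding of a $Q_i$ of dimension $l$ to the complement of an embedded piece of dimension $l$ in $\R^n$ is, in the relevant range, as connected as the inclusion of a configuration-space-like object; the general position estimate is that avoiding an $l$-dimensional subset with an $l$-dimensional source costs connectivity roughly $n-2l$, and cutting out $Q_i$ to reduce handle dimension is what makes the induction feasible. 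Concretely, I expect each square face of $\del^S\calQ_k^{\mathrm{fib}}$ to reduce — after choosing compatible basepoints and using the Isotopy Extension Theorem to trivialize the relevant fibrations, exactly as the $P$/$Q_i$ setup was arranged to permit — to an ``intersection-union'' type square or to a square governed by the connectivity of maps of configuration spaces as in \refEx{Configurations} and \refP{ConfFibersCubeCocartesian}, giving cocartesianity estimates of the needed order $n-2l$ per handle glued.

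The inductive step then runs as follows. For $|S|\geq 2$, the face $\del^S\calQ_k$ is itself (a retract of, or equivalent to) a cube of the same shape for a manifold of smaller handle dimension — here one uses that removing any of the $Q_i$ lowers the handle dimension, exactly the hypothesis built into the choice of the $Q_i$ — so by the inductive hypothesis it is highly cartesian; but we need it \emph{cocartesian}, and here I would invoke \refT{GeneralB-M} or \refT{B-M} in the reverse role together with the general position estimate above to convert the cartesianity/general-position data into the cocartesianity bound $k_S$. The monotonicity condition $k_T\le k_S$ for $T\subseteq S$ of \refT{GeneralB-M} should hold because gluing in more handles only adds more room-to-move and hence more cocartesianity; then the partition minimum is dominated by the coarsest partition $\{\underline{k+1}\}$ versus the finest $\{\{1\},\dots,\{k+1\}\}$, and since each singleton face contributes connectivity $\sim n-2l$ while $|\underline{k+1}|=k+1$, the minimum is $1-(k+1)+(k+1)(n-2l-?)$, which after bookkeeping gives $k(n-2l-2)+1$.

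The main obstacle, I expect, is the precise general position / disjunction estimate: showing that the fiber cubes $\del^S\calQ_k^{\mathrm{fib}}$ are cocartesian to the exact order required, uniformly in $S$ and compatibly across the cube. This is where the careful choice of $P$ as the complement of \emph{all} the $Q_i$ (so that the pieces being glued are genuinely disjoint submanifolds and the Isotopy Extension Theorem applies to their projections) does the real work, and where the transversality counting — an $l$-manifold meeting an $l$-manifold in $\R^n$ generically in dimension $2l-n$, emptily when $n>2l$, with connectivity surplus scaling correctly as we glue $k+1$ of them — must be done honestly rather than hand-waved. Everything else (assembling the bound from \refT{GeneralB-M}, checking monotonicity, the ball base case) is bookkeeping by comparison.
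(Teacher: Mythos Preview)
Your proposal has the right ingredients but assembles them in the wrong order, and in one place proposes a step that does not go through.

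First, there is no induction on handle dimension here. You have conflated this theorem with \refT{TowerConvergenceToEmb2m}: the induction on handle dimension lives \emph{there}, and \refT{TowerConvergenceToEmbCartPQ} is precisely the direct, non-inductive input that makes that induction run. The paper's proof of \refT{TowerConvergenceToEmbCartPQ} is a single, self-contained transversality argument.

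Second, the structure of the argument is: take fibers \emph{once}, in the $Q_{k+1}$ direction, using Isotopy Extension to identify the fiber over a basepoint $e$ as $\Emb(Q,Q_S)$ with $Q=P\cup Q_{k+1}$ and $Q_S=\R^n-e(\bigcup_{i\notin S}Q_i)$. This produces a $k$-cube $\calQ_k^{\text{fib}}$, and by \refP{FiberCartesian} it suffices to show that cube is $(k(n-2l-2)+1)$-cartesian. One then applies \refT{GeneralB-M} to $\calQ_k^{\text{fib}}$, not to the original $(k+1)$-cube; your partition arithmetic with $k+1$ in place of $k$ is a symptom of this confusion.

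Third, and most important: the cocartesianity estimate for each face $\del^S\calQ_k^{\text{fib}}$ is obtained \emph{directly} by transversality, not by any ``reverse'' Blakers-Massey converting cartesian data to cocartesian data. Concretely, for $|S|=s$ one shows the map from the punctured-cube homotopy colimit to $\Emb(Q,Q_S)$ is $(s(n-2l)-1)$-connected by taking a map $f\colon S^j\to\Emb(Q,Q_S)$, adjointing to $h\colon S^j\times Q\to Q_S$, forming the product map $H\colon S^j\times\prod_{i\in S}Q\to\prod_{i\in S}Q_S$, and making $H$ transverse to $\prod_{i\in S}e(Q_i)$. The codimension count ($s(n-l)$ against source dimension $j+sl$) shows the preimage is empty when $j<s(n-2l)$, so $f$ lifts to $\Emb(Q,Q_\emptyset)$ and hence to the whole punctured cube. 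Your appeal to ``intersection-union'' squares and \refP{ConfFibersCubeCocartesian} is misplaced: those belong to the configuration-space derivative argument in \refS{TowerConverge}, not here.

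With the correct cocartesianity numbers $k_S=|S|(n-2l)-1$, the minimum in \refT{GeneralB-M} is attained at the singleton partition and equals $1-k+k(n-2l)-k=k(n-2l-2)+1$, as claimed.
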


\begin{rem} By \refD{CartesianCube}, a way to restate \refT{TowerConvergenceToEmbCartPQ} is to say that the map 
$$
\Emb(P\cup \bigcup_{i\in \underline{k+1}} Q_i, \R^n) \longrightarrow \underset{S\in \calP_0(\underline{k+1})}{\holim} \Emb(P\cup \bigcup_{i\notin S} Q_i, \R^n)
$$
is $(k(n-2l-2)+1)$-connected since $\Emb(P,\R^n)$ is the initial space in the cube $\calQ_k$.
\end{rem}

\begin{proof}[Proof of \refT{TowerConvergenceToEmbCartPQ}]
Assuming $\Emb(P\cup \bigcup_{i\in \underline{k+1}} Q_i, \R^n)$, the initial space in $\calQ_k$, is nonempty (the situation of interest is ultimately $n>2m+2$, and an embedding will always exist in this case), we first choose a basepoint $e$ in this space.  Taking the images of $e$ throughout the cube produces compatible basepoints in all the spaces in the cube.

Now consider the map
$$
\Emb(P\cup Q_{k+1} \cup \bigcup_{i\notin S\subset \underline k} Q_i, \R^n)
\longrightarrow
\Emb(\bigcup_{i\notin S\subset \underline k} Q_i, \R^n).
$$
This is a restriction of an embedding of an open submanifold of $M$ to a closed submanifold.  By the Isotopy Extension Theorem, this map is a fibration whose (homotopy) fiber over $e$ is 
$$
\Emb\Big(P\cup Q_{k+1}, \R^n -  e(\bigcup_{i\notin S\subset \underline k} Q_i)\Big).
$$
To simplify notation for the rest of the proof, let 
\begin{align*}
Q & = P\cup Q_{k+1}, \\
Q_S & =\R^n -  e(\bigcup_{i\notin S\subset \underline k} Q_i).
\end{align*}
We can now consider the $k$-cube obtained from $\calQ_k$ by taking these homotopy fibers in the direction of the maps that project away from  $Q_{k+1}$.  Namely, we can consider the $k$-cube
\begin{align*}
\calQ_k^{\text{fib}}\colon \calP(\underline{k}) & \longrightarrow \Top \\
S &\longmapsto \Emb(Q, Q_S). \notag
\end{align*}
By \refP{FiberCartesian}, if we can show that 
 $$
 \calQ_k^{\text{fib}}  \ \ \ \text{is}\ \ \ \big((k(n-2m-2)+1)\big)\text{-cartesian},
 $$
it would follow that the cube $\calQ_k$ is also $((k(n-2m-2)+1))$-cartesian, and this is what we want to prove.

To do this, we will use \refT{GeneralB-M}.  Recalling that statement, as well as the notion of a face of a cube from \refD{Face}, we wish to know how cocartesian each face
$
\del^S\calQ_k^{\text{fib}}
$
is for $S\subset \underline k$.  By \refD{CartesianCube} this means that we want to know the connectivity of the map
\begin{equation}\label{E:HocolimCocartesianMap}
\underset{T\subsetneq S}{\hocolim}\Emb(Q, Q_T) \longrightarrow \Emb(Q, Q_S).
\end{equation}
To find the connectivity, we will use a general position argument.  This argument will only depend on dimensions of the manifolds and the size of $S$, so, without loss of generality, we may assume $S=\underline s=\{1,2,...,s\}$. 

Given a map 
$$
f\colon S^j\longrightarrow \Emb(Q, Q_S),
$$
we want to know whether it lifts to the punctured cube $T\mapsto\Emb(Q, Q_T)$, $T\subsetneq S$.  It if does, then it also lifts to the homotopy colimit above (see \eqref{E:SpaceToDiagramMapHocolim}). 
However, it suffices to consider the lift to $\Emb(Q, Q_\emptyset)$ since then composing with the maps in the punctured cube provides lifts to the rest of it.  In other words, in the diagram
$$
\xymatrix{
&   \Emb(Q,Q_\emptyset)\ar[d] \\
&   \Emb(Q,Q_T)\ar[d] \\
S^j\ar[r]^-f \ar[uur] \ar@{-->}[ur]& \Emb(Q,Q_S)
}
$$
if we have a lift $S^j\to \Emb(Q,Q_\emptyset)$ of $f$, then we also have the dotted map for all $T\subsetneq S$.  One way to think about this is that, if, given $s\in S^j$, we can produce an embedding of $Q$ that misses all the $Q_i$ (since this is what a point in $\Emb(Q,Q_\emptyset)$ is), then we have an embedding that misses any subset of the $Q_i$'s.

Now consider the adjoint of $f$,
$$
h\colon S^j\times Q \longrightarrow Q_S.
$$
Thus $h$ is a map that misses the images of $Q_i$ in $\R^n$, for $i\notin S$.  If we can arrange for it to also miss the images of $Q_i$ for $i\in S$, then this would mean that $h$ factors through $Q_\emptyset$, or, in other words, that $f$ factors through $\Emb(Q, Q_\emptyset)$.  

So consider the map
\begin{align*}
H\colon S^j\times \prod_{i\in S} Q & \longrightarrow \prod_{i\in S}Q_S \\
(v, x_1, ..., x_{s}) & \longmapsto (h(v,x_1), ..., h(v,x_{s})
\end{align*}  
By a small homotopy, we can arrange for $H$ to be transverse
to the submanifold
$$
\prod_{i\in S}e(Q_i) \subset \prod_{i\in S}Q_S.
$$
Since each $e(Q_i)$ is of dimension $l$ (as $Q_i$ is) and $Q_S$ is of dimension $n$, this submanifold is of codimension $sn-sl=s(n-l)$.  By transversality, the inverse image
$$
H^{-1}\left(\prod_{i\in S}e(Q_i)\right)
$$
is a submanifold of $S^j\times \prod_{i\in S} Q$ of the same codimension.  Its dimension is thus
$$
j+sl - s(n-l) = j+ s(2l-n).
$$
Therefore if $j<-s(2l-n)=s(n-2l)$, this submanifold is empty, which means that nothing maps to the product $\prod_{i\in S}e(Q_i)$, i.e.~$H$ misses all the necessary $e(Q_i)$.  This in turn means that, for such $j$, the map $h$ lands in $Q_\emptyset$, or, in other words, $f$ factors through $\Emb(Q,Q_\emptyset)$.

The argument could be repeated with homotopies
$$
S^j\times I \longrightarrow \Emb(Q,Q_S),
$$
but now the condition would be that $j<s(n-2l)-1$.  

%

Thus for $j<s(n-2l)$, the map $S^j \to \Emb(Q,Q_S)$ lifts to the punctured cube $T\mapsto \Emb(Q, Q_T)$, and hence to its homotopy colimit.  For $j<s(n-2l)-1$, the same happens for the map $S^j\times I \to \Emb(Q,Q_S)$.
Putting this together finally means that that 
for $j<s(n-2l)-1$, there is a bijection of homotopy classes 
$$
\pi_j(\underset{T\subsetneq S}{\hocolim}\Emb(Q, Q_T))\cong \pi_j(\Emb(Q,Q_S)),
$$
and for $j=s(n-2l)-1$, there is a surjection 
$$
\pi_{s(n-2l)-1}(\underset{T\subsetneq S}{\hocolim}\Emb(Q, Q_T))\twoheadrightarrow \pi_{s(n-2l)-1}(\Emb(Q,Q_S)).
$$
In other words, the map \eqref{E:HocolimCocartesianMap} is $s(n-2l)-1$-connected.  Or, 
$$
\del^S\calQ_k^{\text{fib}}\ \ \ \text{is}\ \ \  (s(n-2l)-1)\text{-cartesian}.
$$

Now we want to use \refT{GeneralB-M}, and we first observe that $(s(n-2l)-1)$ increases with $s$, satisfying the monotone increase condition for how cocartesian the face $\del^S\calQ_k^{\text{fib}}$ should be in terms of the size of $S$.  

We next look at the the sum of the connectivities of the faces over partitions of $k$.  Suppose $S_1$, ..., $S_a$ is a partition of $\underline k$ with sizes of the subsets $s_1$, ..., $s_a$.  Then the sum of connectivities of the corresponding faces is
$$
s_1(n-2l)-1+\cdots+s_a(n-2l)-1 = (s_1+\cdots+s_a)(n-2l)-a=k(n-2l)-a.
$$
This number is minimized for the largest $a$, which occurs when $a=k$, i.e.~when $\underline k$ is partitioned into singletons. \refT{GeneralB-M} thus finally says that the cube $\calQ_k^{\text{fib}}$, and hence also the cube $\calQ_k$, is
$$
(1-k+k(n-2l)-k = k(n-2l-2)+1)\text{-cartesian}
$$
as desired.
\end{proof}

\begin{proof}[Proof of \refT{TowerConvergenceToEmb2m}]
The first step in the proof is to upgrade the result of \refT{TowerConvergenceToEmbCartPQ} to the same statement but with handle dimensions replacing the dimensions.  The idea here is that if two manifolds are embedded in another one and the cores of the handles miss each other generically, then it can be arranged that the manifolds themselves miss each other by shrinking the handles.  The details for this kind of a reduction from dimension to handle dimension can be found in the discussion following Theorem A in \cite{GK}.

The argument now is an induction on handle dimension.  This is sensible since, as we saw in \refS{EmbeddingsTower}, $T_k\Emb(M,\R^n)$ can be defined inductively in terms of $T_k$'s of submanifolds of $M$ of lower handle dimension.  At the end of the iterative process, we have punctured cubical diagrams of embeddings of $\leq k$ balls in $\R^n$, and in that case, as noted in \eqref{E:SameOnBalls},
$$\Emb(\leq k \text{ balls}, \R^n)\stackrel{\simeq}{\longrightarrow} T_k\Emb(\leq k \text{ balls}, \R^n).
$$
At this stage, the holes $A_i$ are simply chosen to be the balls themselves, and so 
$$\Emb(\text{balls}, \R^n)\longrightarrow \underset{S\in \calP_0(\underline{k+1})}{\holim} T_k\Emb(\text{balls}-\bigcup_{i\in S}A_i, \R^n)\simeq\underset{S\in \calP_0(\underline{k+1})}{\holim} \Emb(\text{balls}-\bigcup_{i\in S}A_i, \R^n)
$$
is $(k(n-2)+1)$-connected (set $l=0$ in \refT{TowerConvergenceToEmbCartPQ}).  This is the base case for the induction.

For the inductive step, if $L$ is a submanifold of $M$ of some handle dimension $l$, look at the diagram
$$
\xymatrix{
\Emb(L, \R^n) \ar[r]\ar[d] &   T_k\Emb(L, \R^n) \ar[d]\\
\underset{S\in \calP_0(\underline{k+1})}{\holim} \Emb(L-\bigcup_{i\in S}A_i, \R^n) \ar[r] &
\underset{S\in \calP_0(\underline{k+1})}{\holim} T_k\Emb(L-\bigcup_{i\in S}A_i, \R^n)
}
$$
The right vertical map is an equivalence, as mentioned in \eqref{E:T_kSame}.  By induction, for each $S\neq \emptyset$, the map 
$$
\Emb(L-\bigcup_{i\in S}A_i, \R^n) \longrightarrow T_k\Emb(L-\bigcup_{i\in S}A_i, \R^n)
$$
is $(k(n-2(l-1)-2)+1)$-connected, since the handle index of $L-\bigcup_{i\in S}A_i$ is $l-1$.  Passing to homotopy limits over all nonempty $S$, it follows that the bottom horizontal map in the square is $(k(n-2(l-1)-2)+1-k)$-connected (see \cite[Proposition 1.22]{CalcII} or \cite[Proposition 5.4.17]{MV:Cubes}).

By \refT{TowerConvergenceToEmbCartPQ} (more precisely the version where the handle dimension replaces dimension), the left vertical map is $(k(n-2l-2)+1)$-connected.  It then finally follows by \refP{CompositionConnectivity} that the top horizontal map is also $(k(n-2l-2)+1)$-connected as desired.
\end{proof}

More details about the above proof can be found in \cite[Theorem 2.3]{GW:EI2} and \cite[Theorem 10.3.4]{MV:Cubes} (both versions of the proof also give a way of choosing the submanifolds $A_i$ so that the handle dimension is reduced when they are removed).


\subsection{Further comments and generalizations}\label{S:Further}


There are several generalizations that immediately follow.  Namely, the only ingredients in the proof of \refT{TowerConvergenceToEmbCartPQ} were the dimensions of the manifolds involved.  It was not necessary for $P$ or the $Q_i$ to be submanifolds of $L$ nor did they all need to be of same dimension.  Moreover, they could have been open or closed.  In addition, $Q=P\cup Q_{k+1}$ did not need to be a disjoint union of two manifolds.  Finally, the only information we used about $\R^n$ was its dimension, so this could have been any smooth manifold $N$ without boundary of dimension $n$; this in particular means that \refT{TowerConvergenceToEmb2m} is true with $N$ replacing $\R^n$.

Taking all this into account, the proof of \refT{TowerConvergenceToEmbCartPQ} can be repeated verbatim to yield the following more general statement.

\begin{thm}[Generalization of \refT{TowerConvergenceToEmbCartPQ}]\label{T:TowerConvergenceToEmbCartPQGeneral}
Suppose $Q_i$, $1\leq i\leq k+1$, are smooth manifolds of dimensions $q_i$ and $N$ is a smooth manifold of dimension $n$, all without boundary.  Then the $(k+1)$-cube 
\begin{align}
\calQ_k\colon \calP(\underline{k+1}) & \longrightarrow \Top \notag\\
S &\longmapsto \Emb(\bigcup_{i\notin S} Q_i, N) \notag
\end{align}
is 
$$
\text{$\left(\sum_{i=1}^{k}(n-q_i-q_{k+1}-2)+1\right)$-cartesian.}
$$
\end{thm}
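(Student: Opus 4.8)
The plan is to follow the proof of \refT{TowerConvergenceToEmbCartPQ} essentially verbatim, carrying the individual dimensions $q_1,\dots,q_{k+1}$ in place of the single dimension $l$, and tracking the one step that is genuinely dimension-sensitive. First I would dispose of the case where the initial space $\Emb(\bigcup_{i\in\underline{k+1}} Q_i, N)$ is empty (nothing to prove), and otherwise choose a basepoint $e$ there, transporting it through the cube by restriction to get compatible basepoints everywhere. Singling out the direction $k+1$, the maps of $\calQ_k$ in that direction are the restrictions
$$
\Emb(\textstyle\bigcup_{i\notin S} Q_i,\, N)\longrightarrow \Emb(\textstyle\bigcup_{i\notin S\cup\{k+1\}} Q_i,\, N),\qquad S\subseteq\underline k,
$$
which are fibrations by the Isotopy Extension Theorem with (homotopy) fiber $\Emb\big(Q_{k+1},\, N - e(\bigcup_{i\in\underline k - S} Q_i)\big)$. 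Abbreviating $Q_S = N - e(\bigcup_{i\in\underline k - S} Q_i)$, this produces the $k$-cube $\calQ_k^{\text{fib}}\colon S\mapsto \Emb(Q_{k+1}, Q_S)$, and by \refP{FiberCartesian} it suffices to show that $\calQ_k^{\text{fib}}$ is $\left(\sum_{i=1}^k(n-q_i-q_{k+1}-2)+1\right)$-cartesian.

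Next I would estimate how cocartesian each face $\del^S\calQ_k^{\text{fib}}$ is by the same general position argument as before. Taking $S=\underline s$ without loss of generality, and given $f\colon S^j\to\Emb(Q_{k+1}, Q_S)$ with adjoint $h\colon S^j\times Q_{k+1}\to Q_S$, one forms
$$
H\colon S^j\times\textstyle\prod_{i\in S} Q_{k+1}\longrightarrow \textstyle\prod_{i\in S} Q_S,\qquad (v,x_1,\dots,x_s)\longmapsto (h(v,x_1),\dots,h(v,x_s)),
$$
and, after a small homotopy, makes $H$ transverse to $\prod_{i\in S} e(Q_i)$. This is the one place the bookkeeping changes: the submanifold $\prod_{i\in S} e(Q_i)$ now has codimension $\sum_{i\in S}(n-q_i)$ in $\prod_{i\in S} Q_S$, so the preimage $H^{-1}\!\big(\prod_{i\in S} e(Q_i)\big)$ is a submanifold of dimension $j + s\,q_{k+1} - \sum_{i\in S}(n-q_i)$, which is negative, hence the preimage empty, as soon as $j < \sum_{i\in S}(n-q_i-q_{k+1})$; the relative version with homotopies $S^j\times I\to\Emb(Q_{k+1}, Q_S)$ needs $j < \sum_{i\in S}(n-q_i-q_{k+1})-1$. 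Exactly as in \refT{TowerConvergenceToEmbCartPQ}, this forces $f$ to factor through $\Emb(Q_{k+1}, Q_\emptyset)$, hence through $\hocolim_{T\subsetneq S}\Emb(Q_{k+1}, Q_T)$, in the stated range, so $\del^S\calQ_k^{\text{fib}}$ is $\left(\sum_{i\in S}(n-q_i-q_{k+1})-1\right)$-cartesian.

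Finally I would feed these numbers into \refT{GeneralB-M}. Setting $k_S = \sum_{i\in S}(n-q_i-q_{k+1})-1$, one checks these are non-decreasing in $S$, so the monotonicity hypothesis holds, and for a partition $\{T_\alpha\}$ of $\underline k$ into $a$ blocks one has $\sum_\alpha k_{T_\alpha} = \sum_{i=1}^k(n-q_i-q_{k+1}) - a$; hence $1-k+\sum_\alpha k_{T_\alpha}$ is minimized by the partition into singletons ($a=k$) and equals $\sum_{i=1}^k(n-q_i-q_{k+1}-2)+1$. Thus $\calQ_k^{\text{fib}}$, and therefore $\calQ_k$, is $\left(\sum_{i=1}^k(n-q_i-q_{k+1}-2)+1\right)$-cartesian, as claimed. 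I expect the only genuine obstacle to be the point already implicit in the proof of \refT{TowerConvergenceToEmbCartPQ}: verifying that the identification of the restriction maps as fibrations via the Isotopy Extension Theorem and the transversality step remain valid in this generality — $Q_i$ of mixed dimensions, possibly open and noncompact, and $N$ an arbitrary boundaryless manifold. Everything else is the uniform replacement of the single dimension $l$ by the list $(q_1,\dots,q_{k+1})$ in the codimension count.
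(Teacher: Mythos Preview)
Your proposal is correct and is exactly what the paper intends: it explicitly states that the proof of \refT{TowerConvergenceToEmbCartPQ} ``can be repeated verbatim'' to yield this generalization, with the only substantive change being the codimension count you identify (replacing $s(n-l)$ by $\sum_{i\in S}(n-q_i)$ and $sl$ by $s\,q_{k+1}$). One small slip: where you write that $\del^S\calQ_k^{\text{fib}}$ is $\big(\sum_{i\in S}(n-q_i-q_{k+1})-1\big)$-\emph{cartesian}, you mean \emph{cocartesian} (the paper's own proof of \refT{TowerConvergenceToEmbCartPQ} contains the same typo).
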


This result appears as \cite[Proposition 9.7]{GK:EmbDisj}, although it is phrased there in terms of the equivalent statement about how cartesian the cube of fibers, $\calQ_k^{\text{fib}}$, is.  A short proof is given there, and another more complete proof appears as \cite[Theorem 10.3.11]{MV:Cubes}.  A generalization to manifolds with boundary is possible, although some care has to be taken with how the embeddings behave on the boundary.

However, an even stronger statement is true, namely that the cube $\calQ_k$ is 
$$
\left(\sum_{i=1}^k(n-q_i-2)-q_{k+1}+1)\right)\text{-cartesian},
$$
where $q_i$ is not just the dimension of $Q_i$, but its handle dimension (going from dimension to handle dimension is not hard, and it was already mentioned in the proof of \refT{TowerConvergenceToEmb2m}).
This result is due to Goodwillie and Klein \cite[Theorem A]{GK}.  The proof uses a similar result for Poincar\'e embeddings \cite{GK:EmbDisj} and, in the authors' own words, requires ``ideas from homotopy theory, surgery theory, and pseudoisotopy theory''; the many ingredients make for a complex proof that is beyond the intended level of difficulty of this paper.  Goodwillie and Klein also take a general $N$ rather than $\R^n$ and allow for all manifolds to have boundary.  



As Goodwillie and Weiss then show in \cite[Corollary 2.5]{GW:EI2}, the Goodwillie-Klein result leads to the following stronger version of \refT{TowerConvergenceToEmb2m}.

\begin{thm}[Convergence of the Taylor tower to embeddings, strong version]\label{T:TowerConvergenceToEmb}
Suppose $M$ is a smooth closed manifold of dimension $m$.
 Then, for $k\geq 1$, the map 
$$
\Emb(M,\R^n)\longrightarrow T_{k}\Emb(M,\R^n)
$$
is
$$
\text{$(k(n-m-2)-m+1)$-connected.}
$$
Thus if $n>m+2$, the Taylor tower for $\Emb(M,\R^n)$ converges to this space, i.e.~the map $\Emb(M,\R^n)\to T_{\infty}\Emb(M,\R^n)$ is an equivalence.
\end{thm}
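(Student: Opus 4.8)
The plan is to run the handle-dimension induction from the proof of \refT{TowerConvergenceToEmb2m} essentially verbatim, but to replace the input \refT{TowerConvergenceToEmbCartPQ} by the sharper disjunction estimate of Goodwillie and Klein recalled above \cite[Theorem A]{GK}: the cube $\calQ_k$ assembled from pieces of handle dimensions $q_1,\dots,q_{k+1}$ is $\big(\sum_{i=1}^{k}(n-q_i-2)-q_{k+1}+1\big)$-cartesian. Write $c(j)=k(n-j-2)-j+1$ for the connectivity the theorem predicts in handle dimension $j$. Since $M$ is closed of dimension $m$, its handle dimension, and that of every submanifold and every ``hole'' produced in the iterative construction of $T_k\Emb(-,\R^n)$ from \refS{EmbeddingsTower}, is at most $m$; and since $c$ is strictly decreasing in $j$ (its slope is $-(k+1)<0$ for $k\ge 1$), a $c(j)$-connected map with $j\le m$ is in particular $c(m)$-connected. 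So it suffices to prove, by induction on the handle dimension $j$ of a submanifold $L\subseteq M$, that $\Emb(L,\R^n)\to T_k\Emb(L,\R^n)$ is $c(j)$-connected.

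For the base case $j=0$ one reduces, as in \refS{EmbeddingsTower}, to $L$ a disjoint union of at most $k$ balls, where by \eqref{E:SameOnBalls} the map is an equivalence, hence $c(0)$-connected. For the inductive step I would form exactly the commuting square used in the proof of \refT{TowerConvergenceToEmb2m}: punch $k+1$ holes $A_1,\dots,A_{k+1}$ in $L$ so that deleting any nonempty subfamily lowers the handle dimension to $j-1$, and compare $\Emb(L,\R^n)\to T_k\Emb(L,\R^n)$ with the induced map of homotopy limits over $\calP_0(\underline{k+1})$ of the punctured cubes $S\mapsto\Emb(L-\bigcup_{i\in S}A_i,\R^n)$ and $S\mapsto T_k\Emb(L-\bigcup_{i\in S}A_i,\R^n)$. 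The right-hand vertical map is an equivalence by \eqref{E:T_kSame}. The left-hand vertical map is the comparison map of the cube $S\mapsto\Emb(L-\bigcup_{i\in S}A_i,\R^n)$, whose pieces all have handle dimension $\le j$, so by the Goodwillie--Klein estimate it is $c(j)$-connected. The bottom map is, by the inductive hypothesis applied to the $2^{k+1}-1$ manifolds $L-\bigcup_{i\in S}A_i$ (each of handle dimension $\le j-1$) together with the standard loss-of-$k$ estimate for homotopy limits over $\calP_0(\underline{k+1})$ (\cite[Proposition 1.22]{CalcII}, \cite[Proposition 5.4.17]{MV:Cubes}), $(c(j-1)-k)$-connected; and a one-line computation gives $c(j-1)-k=c(j)+1$. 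Hence the left-then-bottom composite is $c(j)$-connected by \refP{CompositionConnectivity}(1), and since it equals the top map followed by an equivalence, \refP{CompositionConnectivity}(3) forces the top map to be $c(j)$-connected, completing the induction. Taking $L=M$ gives that $\Emb(M,\R^n)\to T_k\Emb(M,\R^n)$ is $c(\mathrm{hd}(M))$-connected, hence $c(m)=(k(n-m-2)-m+1)$-connected; for $n>m+2$ this grows without bound in $k$, so, combined with the connectivity of $T_\infty\Emb(M,\R^n)\to T_k\Emb(M,\R^n)$ coming from \refT{TowerConvergence}, one concludes exactly as in \refP{ConnectivityAssumeConvergence} that $\Emb(M,\R^n)\to T_\infty\Emb(M,\R^n)$ is a weak equivalence.

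The hard part is entirely concealed in the phrase ``by the Goodwillie--Klein estimate'': that estimate, \cite[Theorem A]{GK}, has a proof resting on Poincar\'e embedding theory, surgery, and pseudoisotopy theory, far outside the range of techniques used here --- which is precisely why we quote it rather than prove it. Within the portion one actually carries out, the only genuine work is the handle-theoretic bookkeeping: producing the holes $A_i$ so that removing any nonempty subfamily drops the handle dimension, and checking that every manifold arising in the recursion (the punctured pieces and their thickened holes alike) has handle dimension $\le m$; this is done, with the same choices, in \cite{GK} and in \cite[Section 10.3]{MV:Cubes}. The remaining point, that the estimates telescope --- i.e.\ that the extra $+1$ in $c(j-1)-k=c(j)+1$ exactly compensates the connectivity lost when passing to homotopy limits at each stage --- is the arithmetic identity displayed above and costs nothing.
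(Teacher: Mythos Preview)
Your proposal is correct and is precisely the argument the paper alludes to but does not carry out: the paper does not give its own proof of \refT{TowerConvergenceToEmb}, instead pointing to \cite[Corollary 2.5]{GW:EI2} and observing that the only change from the proof of \refT{TowerConvergenceToEmb2m} is to swap \refT{TowerConvergenceToEmbCartPQ} for the Goodwillie--Klein estimate \cite[Theorem A]{GK}; you have spelled out exactly that swap, with the arithmetic $c(j-1)-k=c(j)+1$ checked.

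One small remark: in the final sentence you invoke \refT{TowerConvergence} and \refP{ConnectivityAssumeConvergence} to deduce the equivalence $\Emb(M,\R^n)\to T_\infty\Emb(M,\R^n)$, but this is unnecessary (and the paper explicitly notes that the strong convergence does not rely on \refT{TowerConvergence}). Once you know $\Emb(M,\R^n)\to T_k\Emb(M,\R^n)$ is $c(m)$-connected with $c(m)\to\infty$, the map to the inverse limit is automatically a weak equivalence: for each $i$ the groups $\pi_i(T_k\Emb(M,\R^n))$ stabilize (to $\pi_i\Emb(M,\R^n)$) for large $k$, so the $\lim^1$ term vanishes and $\pi_i(T_\infty)\cong\pi_i(\Emb)$. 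Indeed, as the paper remarks, the logic runs the other way: \refT{TowerConvergence} is a corollary of \refT{TowerConvergenceToEmb} via \refP{CompositionConnectivity}(2).
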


Because the Goodwillie-Klein result is more general, \refT{TowerConvergenceToEmb} is also true more generally for any $N$ in place of $\R^n$, and all the manifolds can have boundary (the case of manifolds with boundary is treated separately in \cite[Section 5]{GW:EI2}).  Furthermore, one can replace $M$ by the interior of any submanifold of codimension zero, in which case $m$ is replaced by its handle dimension (this includes $M$ itself); this is in fact already apparent from our proof of \refT{TowerConvergenceToEmb2m} since that part of the proof does not depend on particular connectivity numbers.


An overview of the arguments that lead to \refT{TowerConvergenceToEmb} can be found in \cite[Section 3]{GKW:EmbDisjSurg} as well as in \cite[Section 10.3]{MV:Cubes} (although the Goodwillie-Klein result is beyond the scope of the latter work as well).

The proof of the above strong convergence result does not utilize \refT{TowerConvergence}, as our weaker statement,
\refT{TowerConvergenceToEmbWeak}, does.  Namely, the connectivity numbers are proved directly and the convergence follows under the assumption $n>m+2$. With this in hand, one can then in fact deduce \refT{TowerConvergence} as a consequence of \refT{TowerConvergenceToEmb}.
%
%
%
%
%
More precisely, given \refT{TowerConvergenceToEmb}, we have a diagram
$$
\xymatrix{
\Emb(M,\R^n)\ar[rrr]^-{k(n-m-2)-m+1} \ar[drrr]^(0.5){\ \ \ \ \ \ \ \ (k-1)(n-m-2)-m+1}& & & T_k \Emb(M,\R^n) \ar[d]\\
& & & T_{k-1}\Emb(M,\R^n)
}
$$
with the connectivities of the maps indicated.  But then by \refP{CompositionConnectivity}(2), it follows that $T_k\Emb(M,\R^n)\to T_{k-1}\Emb(M,\R^n)$ must have connectivity that is the lower of the two numbers, which is $(k-1)(n-m-2)-m+1$ and this is precisely what \refT{TowerConvergence} asserts.

What we thus have is that \refT{TowerConvergence} is subsumed by \refT{TowerConvergenceToEmb}, but, in our setup, the former still has merit since it has allowed us to improve the connectivites of the maps to the tower from what \refT{TowerConvergenceToEmb2m}, the weak convergence result, initially gave us --- we were able to bring them up to $k(n-m-2)-m+1$ from $k(n-2m-2)+1$ (using \refP{ConnectivityAssumeConvergence}).  What we are not able to do is improve the assumption $n>2m+2$ to the better one $n>m+2$; this is where the proof of \refT{TowerConvergenceToEmb} becomes too demanding for the scope of this paper.

\bibliographystyle{alpha}

\bibliography{/Users/ismar/Desktop/Math/Bibliography}

\end{document}